\documentclass[11pt,a4paper]{amsart}

\usepackage{amssymb}





\renewcommand{\epsilon}{\varepsilon}
\renewcommand{\setminus}{\smallsetminus}
\renewcommand{\emptyset}{\varnothing}

\newtheorem{theorem}{Theorem}[section]
\newtheorem{proposition}[theorem]{Proposition}
\newtheorem{corollary}[theorem]{Corollary}
\newtheorem{lemma}[theorem]{Lemma}
\newtheorem{conjecture}[theorem]{Conjecture}

\theoremstyle{definition}

\theoremstyle{remark}

\newcommand{\normal}{\lhd}




\newcommand{\Z}{\mathbb Z}

\newcommand{\R}{\mathbb R}

\newcommand{\BR}{\mathbb{R}}
\newcommand{\BQ}{\mathbb{Q}}
\newcommand{\BF}{\mathbb{F}}
\newcommand{\BZ}{\mathbb{Z}}
\newcommand{\BN}{\mathbb{N}}


\newcommand{\fpinfty}{{\FP}_{\infty}}

\newcommand{\FP}{\operatorname{FP}}


\newcommand{\cohom}[3]{H^{{\raise1pt\hbox{$\scriptstyle#1$}}}(#2\>\!,#3)}
\newcommand{\tatecohom}[3]%
  {\widehat H^{{\raise1pt\hbox{$\scriptstyle#1$}}}(#2\>\!,#3)}

\newcommand{\Cohom}[3]%
  {H^{{\raise1pt\hbox{$\scriptstyle#1$}}}\big(#2\>\!,#3\big)}
\newcommand{\Tatecohom}[3]%
  {\widehat H^{{\raise1pt\hbox{$\scriptstyle#1$}}}\big(#2\>\!,#3\big)}

\newcommand{\homol}[3]{H_{{\lower1pt\hbox{$\scriptstyle#1$}}}(#2\>\!,#3)}
\newcommand{\homolog}[2]{H_{{\lower1pt\hbox{$\scriptstyle#1$}}}(#2)}





\newcommand{\mono}{\rightarrowtail}
\newcommand{\epi}{\twoheadrightarrow}


\newcommand{\eg}{{\underline EG}}





\title{Centralisers of finite subgroups in soluble groups of type $\FP_n$}
\author{D.~ H. ~Kochloukova}
\address{Dessislava H.~Kochloukova, Department of Mathematics, University of Campinas, Cx. P. 6065,
13083-970 Campinas, SP, Brazil}
\email{desi@unicamp.br}
\author{C.~Mart\'inez-P\'erez}
\address{Conchita Mart\'inez-P\'erez, IUMA. Departamento de Matem\'aticas, Universidad de Zaragoza,
50009 Zaragoza, Spain} \email{conmar@unizar.es}
\author{B.~ E.~A.~Nucinkis}
\address{Brita E.~A.~Nucinkis, School of Mathematics, University of Southampton, Southampton,
SO17 1BJ, United Kingdom}
\email{bean@soton.ac.uk}
\date{\today} 
\keywords{}
\subjclass[2000]{
20J05}

\thanks{This work was supported by EPSRC grant EP/F045395/1  and  LMS Scheme 4 grant 4708.
The second named author was also supported by Gobierno de Aragon and
MTM2007-68010-C03-01. The first named author is partially supported by CNPq, Brazil.}


\begin{document}

\maketitle

\thispagestyle{empty}

\begin{abstract} We show that for soluble groups of type $\FP_n,$ centralisers of finite subgroups need not be of type $\FP_n$. \end{abstract}


\section{Introduction}

\noindent
We study stabilizers of finite groups acting on soluble groups of type $\FP_n$. Our interest in this problem  derives from the study  of groups $G$ of type Bredon-$\FP_{d}$  with respect to the family ${\mathcal{F}}$ of all finite subgroups of $G$.   For the family $\mathcal F$ of finite subgroups, the classifying space for proper actions,  denoted by $\eg$,  has been widely studied. In particular, L\"uck  has shown \cite{Lueck} that $G$ admits a cocompact model for  $\eg$  if and only if $G$ has finitely many  conjugacy classes of finite subgroups and the stabilizer in $G$ of every finite subgroup is finitely presented and of type $\FP_{\infty}$.  Bredon cohomology with respect to the family of all finite subgroups can be viewed as the algebraic mirror to classifying spaces for proper actions. Its properties are similar to those of ordinary cohomology, the mirror for Eilenberg MacLane spaces.
The following result is an algebraic version of L\"uck's result, generalised to arbitrary $d$, and serves as the main motivation for this paper.

\begin{theorem}\cite[Lemma 3.1]{kmn}\label{bredonfpn}
A group is of type Bredon-$\FP_n$ if and only if it has finitely many conjugacy classes of finite subgroups and centralisers of finite subgroups are of type $\FP_n$.
\end{theorem}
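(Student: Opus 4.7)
I would work in the category of (right) $\OFG$-modules; the free objects are direct sums of representables $\Z[-, G/H]$ with $H \in \mathcal{F}$, and evaluation at $G/K$ gives the permutation $\Z[N_G(K)/K]$-module on the fixed-point set $(G/H)^K$. Write $W_H := N_G(H)/H$.

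For necessity, finite generation of $\uz$ yields a surjection $F_0 := \bigoplus_{i=1}^k \Z[-, G/H_i] \epi \uz$. Evaluating at $G/K$ for any finite subgroup $K$ makes the target $\Z$, forcing some $(G/H_i)^K$ to be non-empty; hence $K$ is subconjugate to some $H_i$, and there are only finitely many conjugacy classes of finite subgroups. For the centraliser condition, fix a finite subgroup $H$ and evaluate a Bredon-$\FP_n$ resolution $P_\ast \epi \uz$ at $G/H$. Exactness of evaluation produces a resolution of $\Z$ over $\Z W_H$ whose terms are finite direct sums of permutation modules $\Z[W_H/F]$ with $F$ finite: $(G/H_i)^H$ has only finitely many $W_H$-orbits because $H_i$ has only finitely many subgroups, and every orbit stabiliser embeds into the finite group $gH_ig^{-1}/H$. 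Since each $\Z[W_H/F]$ is of type $\FP_\infty$ over $\Z W_H$ (induction from the Noetherian ring $\Z F$), it follows that $\Z$ is of type $\FP_n$ over $\Z W_H$; commensurability of $C_G(H)$ with $W_H$ then gives that $C_G(H)$ is of type $\FP_n$.

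For sufficiency, I would inductively construct a Bredon-$\FP_n$ resolution of $\uz$. Start with $F_0 := \bigoplus_{i=1}^k \Z[-, G/H_i]$, where the $H_i$ represent the finitely many conjugacy classes of finite subgroups. For each $H \in \mathcal{F}$, the hypothesis that $C_G(H)$, and hence $W_H$, is of type $\FP_n$ yields an $\FP_n$-resolution of $\Z$ over $\Z W_H$, which transports to $\OFG$-modules via the induction functor $M \mapsto \Z[-, G/H] \otimes_{\Z W_H} M$, sending free $\Z W_H$-modules to free Bredon modules on representables at $G/H$. Working through the poset of conjugacy classes of finite subgroups ordered by subconjugacy, iteratively splice these induced resolutions to build, degree by degree, a finitely generated free Bredon resolution of $\uz$ up to degree $n$.

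The main obstacle is the ``leakage'' between strata: $\Z[-, G/K]$ is non-zero at $G/H$ whenever $H$ is subconjugate to $K$, so resolving at one stratum contributes non-trivially at all lower strata. The splicing step must absorb these contributions; they are, however, finitely generated permutation modules over finite stabilisers and hence of type $\FP_\infty$ as $\Z W_H$-modules, so the $\FP_n$ property is preserved at every stage. Finiteness of the poset of conjugacy classes of finite subgroups guarantees the induction terminates.
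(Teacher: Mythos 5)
Note first that the paper does not actually prove this statement: it is quoted verbatim from \cite[Lemma 3.1]{kmn}, so the comparison below is with the standard argument in that reference (and in L\"uck's paper \cite{Lueck}) rather than with a proof in the present text.

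Your necessity direction is correct and complete in all essentials. Evaluation at $G/H$ is exact; a finitely generated free Bredon module $\bigoplus_i\Z[-,G/H_i]$ evaluates to a finite sum of permutation modules $\Z[(G/H_i)^H]$ with finitely many $N_G(H)$-orbits and finite stabilisers (the orbit count is finite because $gH_i\mapsto g^{-1}Hg$ lands in the finite set of subgroups of $H_i$ and each fibre is the image of a single double coset $N_G(H)gH_i$); each summand $\Z[W_H/F]$ is induced from the finite group $F$ and hence of type $\FP_\infty$ over $\Z W_H$, and the standard lemma on resolutions with $\FP_\infty$ terms gives that $\Z$ is $\FP_n$ over $\Z W_H$, hence that the commensurable group $C_G(H)$ is of type $\FP_n$.

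The gap is in the sufficiency direction, which is where the content of the theorem lies: the ``splicing through the poset of subconjugacy classes'' is precisely the step that has to be carried out, and the justification you offer (the leakage terms are $\FP_\infty$ permutation modules) only controls finite generation of the \emph{evaluations} $K(G/H)$ of the successive kernels; it does not by itself produce a finitely generated free Bredon cover of $K$, which is what the inductive step requires. The missing observation --- which also makes the entire stratification unnecessary --- is that for a finite subgroup $H$ one has $\mathrm{mor}_{\OFG}(G/H,G/H)=W_H$ (any $g$ with $g^{-1}Hg\le H$ normalises $H$, as $H$ is finite), so the Bredon submodule of $K$ generated by chosen $\Z W_{H_i}$-generating sets of the finitely many evaluations $K(G/H_1),\dots,K(G/H_k)$ is all of $K$: evaluation-wise finite generation implies Bredon finite generation once there are finitely many conjugacy classes of finite subgroups. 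With that in hand the induction is direct: given a finitely generated free partial resolution $F_j\to\cdots\to F_0\epi\uz$ with $j\le n-1$, evaluate at each $G/H_i$, apply the same $\FP_\infty$-terms lemma as in your necessity argument to see that $\ker\bigl(F_j(G/H_i)\to F_{j-1}(G/H_i)\bigr)$ is finitely generated over $\Z W_{H_i}$, conclude that the kernel is a finitely generated Bredon module, and cover it by a finitely generated free one. Either insert this criterion, or else set up L\"uck's splitting and extension functors properly to make your splicing step precise; as written the argument does not close.
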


\noindent
Recently it was proved that every virtually soluble group of type $\FP_{\infty}$ is of type Bredon-$\FP_{\infty}$ \cite{martineznucinkis}. Here we show that the equivalent statement does not hold for type $\FP_n$ using methods from $\Sigma$-theory developed by R. Bieri, J. Groves, R. Strebel and others. Even virtually metabelian groups  of type $\FP_n$ are not necessarily of type Bredon-$\FP_n$.
In sections \ref{prime} and \ref{torsionfree}  we present two types of examples for which we calculate the homological type of the  centralizers of finite actions. 
In these examples $G$ is an extension of $A$ by $Q$ where $A$ and $Q$ are abelian groups with $Q$ of rank $n$, $G$ is of type $\FP_n$ but not $\FP_{n+1}$  and there is a  finite group $H$ of order $n$ acting on $G$. Furthermore this $A$ has  Krull dimension 1 as a $\BZ Q$-module.
We show that if $H_0$ is a subgroup of index $d$ in $H$ then $C_G(H_0)$ is of type $\FP_d$ but not $\FP_{d+1}$. In the case when $A$ is of prime exponent, the finite group $H$ of our examples is cyclic. But in the case when $A$ is torsion-free, $H$ can be any finite group which is realisable as a Galois group of a finite extension over $\BQ$.  In particular $H$ can be any symmetric group. 

\noindent
In addition to these  examples
we prove some positive results describing the finiteness conditions of centralisers of finite subgroups. In particular, we show in Theorem \ref{metabelianbredonfpn} that a metabelian-by-finite group $G$ of type $\FP_n$ with finite Pr\"ufer rank  is of
type Bredon $\FP_d$ for $d=\lfloor \frac{n}{s} \rfloor,$ where $s$ denotes the upper bound on the orders of the finite subgroups of $G$ (see \ref{conjclasses}). Some partial results concerning Bredon homological type $\FP_k$ for soluble-by-finite groups of finite Pr\"ufer rank are included in section \ref{Prufer}.

\section{Preliminaries on the Bieri-Strebel Sigma invariant}

\noindent
Let $G$ be a finitely generated group. The character sphere $S(G)$ of $G$ is defined by $$S(G) = (Hom_{\BZ}(G, \BR) \setminus \{ 0 \}) / \sim,$$ where $\BR$ is considered as a group via addition and $\sim$ is the equivalence relation where $\chi_1 \sim \chi_2$ if there is a positive real number $r$ such that $r \chi_1 = \chi_2$. We write $[\chi]$ for the class of $\chi$ in $S(G)$ and denote 
 $$G_{\chi}=\{g\in G | \chi(g)\geq 0 \}.$$ Note that $G_{\chi}$ is a submonoid of $G$.
 
 \noindent
 Let $Q$ be a finitely generated abelian group and $A$ be a  $\BZ Q$-module. The Bieri-Strebel invariant \cite{BieriStrebel} is defined as
 $$
 \Sigma_A(Q) = \{ [\chi] \in S(Q) | A \hbox{ is finitely generated over } \BZ Q_{\chi} \}.
 $$
Complements in the character sphere are denoted as follows:
$$
\Sigma_A^c(Q) = S(Q) \setminus \Sigma_A(Q).
$$
We say that $A$ is $m$-tame as a
$\BZ Q$-module if whenever $[\chi_1], \ldots , [\chi_m] \in \Sigma_A^c(Q)$ we have $\chi_1 + \ldots + \chi_m \not= 0$.
The following conjecture was suggested in \cite{BieriGroves} after R. Bieri and R. Strebel had already resolved the case $m= 2$ \cite{BieriStrebel}.

\medskip\noindent
{\bf The $\FP_m$-Conjecture.} {\it Let $ 1 \to A \to G \to Q \to 1$ be a short exact sequence of groups with $G$ finitely generated and $A$ and $Q$ abelian. Then $G$ is of type $FP_m$ if and only if $A$ is $m$-tame as a $\BZ Q$-module.}

\medskip\noindent
Though the $\FP_m$-Conjecture is still open in general, it was shown to hold for metabelian groups of finite Pr\"ufer rank \cite{aberg} (recall that a group is said to be of finite Pr\"ufer rank if  there is an upper bound on the number of generators of the finitely generated subgroups).

\section{Metabelian groups of finite Pr\"ufer rank}

\bigskip\noindent
Let $Q$ be a finitely generated abelian group acting 
on an abelian group $A$ which is finitely generated as a $\Z
Q$-module. If a group $H$ acts on both $Q$ and $A$ we say that the
actions are compatible if
$$(a^q)^h=(a^h)^{q^h}$$
for any $a\in A,$ $q\in Q$, $h\in H.$
Note that there is an induced action of $H$ on the valuation sphere such that
for $h\in H$, $[\upsilon^h]$ is given by
$$\upsilon^h (q):=\upsilon(q^{h^{-1}}).$$
If the actions of $H$ and $Q$ on $A$ are compatible then $\Sigma_A^c(Q)$ is $H$-invariant (see \cite[3.4]{martineznucinkis}).

\begin{lemma}\label{pos} Let $1 \to A \to G \to Q \to 1$ be a short exact sequence of groups with $G$ finitely generated and $A$ and $Q$ abelian. Let $H$ be a finite group acting on $G$ such that $A$ is $H$-invariant.  Suppose $A$ is $n$-tame as a $\BZ Q$-module, where $Q$ acts on $A$ by conjugation and $d$ is a positive  integer such that
$$d|H|\leq n.$$ Then $A_0=C_A(H)$ is a finitely generated and $d$-tame  $\Z C_Q(H)$-module.
\end{lemma}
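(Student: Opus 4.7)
The lemma asserts two properties of $A_0:=C_A(H)$: that it is finitely generated as a $\BZ C_Q(H)$-module, and that it is $d$-tame. The substantive part is $d$-tameness, where the budget $d|H|\le n$ is used to trade the $n$-tameness of $A$ for the $d$-tameness of $A_0$ via an $H$-averaging argument on the character sphere. Finite generation I expect to require a parallel, slightly separate argument of the same flavour.

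For $d$-tameness I would argue by contradiction: assume $[\chi_1],\ldots,[\chi_d]\in \Sigma_{A_0}^c(C_Q(H))$ satisfy $\chi_1+\cdots+\chi_d=0$. The technical crux is a sublemma: every $[\chi]\in \Sigma_{A_0}^c(C_Q(H))$ admits some extension $\widetilde\chi\colon Q\to\BR$ of $\chi$ with $[\widetilde\chi]\in \Sigma_A^c(Q)$. Granting this, choose such extensions $\widetilde\chi_i$ and consider the characters $\widetilde\chi_i^h$ for $h\in H$. By the $H$-stability of $\Sigma_A^c(Q)$ under compatible actions (recalled just before the statement), each $[\widetilde\chi_i^h]\in \Sigma_A^c(Q)$, providing $d|H|$ characters of $\Sigma_A^c(Q)$ counted with multiplicity. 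To see they sum to zero, set $f:=\sum_i\widetilde\chi_i$ and observe $f|_{C_Q(H)}=\sum_i\chi_i=0$; then $F:=\sum_{h\in H}f^h\colon Q\to\BR$ is $H$-invariant and, since $q^h=q$ on $C_Q(H)$, still vanishes there. Extending $F$ $\BR$-linearly to $Q\otimes\BR$ and decomposing the $\BR H$-module $Q\otimes\BR=(Q\otimes\BR)^H\oplus V'$ (valid because $|H|\in\BR^{\times}$), the functional $F$ vanishes on $V'$ (there is no nonzero $H$-invariant linear form on a sum of nontrivial $\BR H$-irreducibles) and on $(Q\otimes\BR)^H=C_Q(H)\otimes\BR$ by hypothesis, hence $F\equiv 0$. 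Thus $\sum_{i,h}\widetilde\chi_i^h=0$, giving $d|H|\le n$ characters of $\Sigma_A^c(Q)$ summing to zero and contradicting the $n$-tameness of $A$.

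Finite generation of $A_0$ over $\BZ C_Q(H)$ I would establish in a similar spirit: using $|H|\le n$-tameness of $A$, pick an $H$-orbit $\{\psi^h\}\subseteq \Sigma_A(Q)$, take a finite $\BZ Q_\psi$-generating set $a_1,\ldots,a_k$ of $A$, and observe by compatibility that the translates $a_j^h$ generate $A$ over $\BZ Q_{\psi^h}$. The norm map $N_H\colon A\to A_0$, $a\mapsto\sum_{h\in H}a^h$, is $\BZ C_Q(H)$-linear, and the $N_H(a_j)$ together with a finite set of representatives for the $|H|$-torsion cokernel $A_0/N_H(A)$ (which is finitely generated as a $\BZ Q$-module because $A$ is Noetherian over $\BZ Q$) yield a finite $\BZ C_Q(H)$-generating set for $A_0$.

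\textbf{Main obstacle.} The hardest step is the extension sublemma of the second paragraph, i.e.\ lifting a bad character on $C_Q(H)$ to a bad character on $Q$. The cleanest formulation is its contrapositive: if every extension $\widetilde\chi\colon Q\to\BR$ of $\chi$ lies in $\Sigma_A(Q)$, then $[\chi]\in \Sigma_{A_0}(C_Q(H))$. I would attack this by exploiting the openness of $\Sigma_A(Q)$ in $S(Q)$ to cover the affine space of extensions of $\chi$ by finitely many $\widetilde\chi^{(1)},\ldots,\widetilde\chi^{(r)}\in \Sigma_A(Q)$, take finite generating sets of $A$ over each $\BZ Q_{\widetilde\chi^{(j)}}$, and $H$-average them via the norm map to obtain a finite $\BZ C_Q(H)_\chi$-generating set of $A_0$. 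Making the compactness-and-averaging step rigorous---in particular verifying that the resulting elements do generate $A_0$ over the correct monoid ring $\BZ C_Q(H)_\chi$ and not merely over $\BZ C_Q(H)$---is the central technical hurdle I anticipate.
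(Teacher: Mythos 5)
Your averaging scheme for $d$-tameness (produce $d|H|$ points of $\Sigma_A^c(Q)$ summing to zero by summing an $H$-orbit, using the decomposition of $Q\otimes\R$ into invariants plus a complement) is exactly the idea the paper uses. But the proof as proposed has a genuine gap at the point you yourself flag, and the proposed repairs do not work. Your sublemma asks to lift a point of $\Sigma_{A_0}^c(C_Q(H))$ to a point of $\Sigma_A^c(Q)$ -- a change of \emph{both} the module and the group -- and neither the compactness argument nor the norm map delivers this. The set of extensions of $\chi$ is a non-compact affine subspace of $\Hom(Q,\R)$, so openness of $\Sigma_A(Q)$ gives no finite subcover without first controlling the directions at infinity, i.e.\ the characters vanishing on $C_Q(H)$; that control is precisely the finite-generation statement you are trying to prove, so the argument is circular. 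The norm map $N_H\colon A\to A_0$ is $\BZ C_Q(H)$-linear but not $\BZ Q$-linear (indeed $N_H(a\lambda)=\sum_h a^h\lambda^h$), so a generating set of $A$ over $\BZ Q_\psi$ does not push forward to a generating set of $N_H(A)$ over any monoid ring of $C_Q(H)$; moreover $A_0/N_H(A)$ is not a $\BZ Q$-module at all, since $Q$ does not preserve $A_0$, so Noetherianity of $\BZ Q$ says nothing about it.

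The paper's fix is a reduction you are missing: it suffices to show that $A$ itself, not $A_0$, is finitely generated and $d$-tame as a $\BZ Q_0$-module, where $Q_0=C_Q(H)$. Once $A$ is finitely generated over the Noetherian ring $\BZ Q_0$, the submodule $A_0$ is automatically finitely generated, and since $\Sigma^c$ of a submodule is contained in $\Sigma^c$ of the module, $d$-tameness is inherited by $A_0$. After this reduction every lifting statement concerns the single module $A$: one passes to a finite-index subgroup $\widetilde Q=Q_0\times Q_1$ with $\sum_{t\in H}t$ annihilating $Q_1$ (this is your invariants-plus-complement decomposition, realised inside $Q$); finite generation of $A$ over $\BZ Q_0$ is equivalent, by Bieri--Strebel, to $[\chi]\in\Sigma_A(\widetilde Q)$ for every nonzero $\chi$ vanishing on $Q_0$, and such $\chi$ are ruled out of $\Sigma_A^c$ by your averaging argument using only $|H|$-tameness; and the lifting needed for $d$-tameness goes from $\Sigma_A^c(Q_0)$ to $\Sigma_A^c(\widetilde Q)$, a standard one-module fact. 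Without this reduction, your sublemma is essentially equivalent to the lemma itself.
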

\begin{proof} 
Let $Q_0=C_Q(H)$. The finite group $H$ acts compatibly on $A$ and $Q$ and we can apply
 \cite[Lemma 3.5]{martineznucinkis} to see that there is a subgroup $Q_1$ of $Q$ such that $\widetilde{Q} = Q_0
\times Q_1$ is a subgroup of finite index in $Q$ and  \begin{equation} \label{annihilate} \sum_{t \in H} t \hbox{
acts trivially on  }Q_1. \end{equation} 

\noindent To prove that $A_0$ is a finitely generated $d$-tame $\BZ Q_0$-module it suffices to show that $A$ is a finitely generated $d$-tame $\BZ Q_0$-module, as $\Z Q_0$ is Noetherian and furthermore $d$-tameness is preserved by submodules \cite[Lemma 1.1]{BieriStrebel}.

\noindent We show first that $A$ is finitely generated
as a $\Z Q_0$-module
and  by Corollary \cite[Cor.~4.5]{bs81} this is equivalent to the following:
for every non-zero character $\chi : \widetilde{Q}  \to
\R $ such that $\chi(Q_0) = 0$ we have
that $[\chi] \in \Sigma_A(\widetilde{Q})$. Assume now that for some
$\chi$ as above we have that
$[\chi] \notin \Sigma_A(\widetilde{Q})$. Then using (\ref{annihilate}) $$\sum_{t \in H}[\chi ^
t] = 0 \hbox{
and }[\chi^t] \in \Sigma_A^c(\widetilde{Q}),$$ so $A$ is not $|H|$-tame, contradicting the hypothesis that $A$ is $n$-tame as a $\Z
Q$-module and therefore is also $n$-tame as a $\Z \widetilde{Q}$-module.

\noindent
Suppose that $A$ is not $d$-tame as a $\Z Q_0$-module. Then there are
elements \hfill\break $[\chi_1], \ldots, [\chi_d] \in \Sigma_{A_0}^c(Q_0)$ such that
$\sum_{1 \leq i \leq d} \chi_i = 0$. By \cite[Lemma 3.3]{martineznucinkis} there are
homomorphisms $\mu_i : Q_1 \to \R$ such that $[\chi_i + \mu_i] \in
\Sigma_A^c(\widetilde{Q})$. Using (\ref{annihilate}) we
obtain that $$\sum_{t \in H} \mu_i^t = 0 \hbox{ for every } 1 \leq i \leq d.$$
Hence
$$
\sum_{1 \leq i \leq d} \sum_{t \in H} (\chi_i^t + \mu_i^t)  = \sum_{1
\leq i \leq d} \sum_{t \in H} \chi_i^t  = (\sum_i \chi_i) | H |
= 0.
$$
Thus we have $d | H |$  characters  $\{ \chi_i^t + \mu_i^t \}_{1 \leq i \leq d, t \in H}$
summing to 0 and
$[\chi_i^t + \mu_i^t] \in \Sigma_A^c(\widetilde{Q})$, contradicting the fact that $A$
is $n$-tame as a $\Z \widetilde{Q}$-module and that  $d | H |\leq n$.  Hence $A$ is $d$-tame as a $\Z
Q_0$-module, proving the claim.
\end{proof}

\noindent Let $1 \to A \to G \to Q \to 1$ be a short exact sequence of groups with $G$  finitely generated and $A$ and $Q$ abelian. Consider the
following two conditions:

(i) $G$ has finite Pr\"ufer rank and $A = G'$;

(ii) $G=Q\ltimes A.$

\begin{proposition}\label{pos2} Assume $G$ satisfies either i) or ii) above and that
$A$ is $n$-tame as a $\Z Q$-module. Let $H$ be a finite group acting on
$G$ such that in case ii) $A$ and $Q$ are $H$-invariant and let $d$ be a positive integer with
$$d | H |\leq n.$$
Then  $C_A(H)$ is a finitely generated $d$-tame $\Z [C_G(H)/C_A(H)]$-module.
\end{proposition}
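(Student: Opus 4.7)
The plan is to reduce both cases to Lemma \ref{pos} applied to the extension $1\to A\to G\to Q\to 1$, and then to transport its conclusion from the coefficient ring $\BZ C_Q(H)$ to the subring $\BZ[C_G(H)/C_A(H)]$.

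First I would check that Lemma \ref{pos} applies in both cases. The hypothesis that $A$ is $H$-invariant holds by assumption in case (ii) and, in case (i), because $A=G'$ is characteristic in $G$. In either case $H$ also acts on $Q$, and the required compatibility condition $(a^q)^h=(a^h)^{q^h}$ is immediate because $H$ acts by group automorphisms on $G$ and the $Q$-action on $A$ is conjugation inside $G$. Lemma \ref{pos} then yields that $C_A(H)$ is finitely generated and $d$-tame as a $\BZ C_Q(H)$-module.

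Second, the projection $G\epi Q$ restricts to a homomorphism $C_G(H)\to C_Q(H)$ with kernel $C_A(H)$, giving an injection $C_G(H)/C_A(H)\hookrightarrow C_Q(H)$. In case (ii) the semidirect product decomposition forces $C_G(H)=C_Q(H)\ltimes C_A(H)$ (writing $g=aq$ uniquely and using that both factors are $H$-invariant), so this injection is an isomorphism and the desired conclusion is immediate.

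The substantive case is (i), where I would show that the image of $C_G(H)/C_A(H)$ has finite index in $C_Q(H)$. For $q\in C_Q(H)$ with any lift $g\in G$, the map $h\mapsto g^{-1}g^h$ is a $1$-cocycle $H\to A$ whose class in $H^1(H,A)$ is independent of the lift, and the induced assignment $C_Q(H)\to H^1(H,A)$ is a homomorphism with kernel equal to the image of $C_G(H)$. The main obstacle is to prove that $H^1(H,A)$ is finite. For this I would argue that $H^1(H,A)$ is a subquotient of finitely many copies of $A$ (via the bar resolution) and is annihilated by $|H|$; since $A\leq G$ inherits finite Pr\"ufer rank and Pr\"ufer rank passes to subquotients of abelian groups, $H^1(H,A)$ has finite Pr\"ufer rank and bounded exponent, and hence is finite by a standard lemma.

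Finally, since $C_G(H)/C_A(H)$ sits as a finite-index subgroup of $C_Q(H)$, the corresponding character spheres and $\Sigma$-invariants are canonically identified; finite generation and $d$-tameness therefore transfer from $\BZ C_Q(H)$ to $\BZ[C_G(H)/C_A(H)]$, completing the proof.
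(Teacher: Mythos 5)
Your proposal is correct and follows the same skeleton as the paper: apply Lemma \ref{pos} to get that $A_0=C_A(H)$ is finitely generated and $d$-tame over $\Z C_Q(H)$, dispose of case (ii) via $C_G(H)=C_Q(H)\ltimes A_0$, and in case (i) reduce to showing that the image of $C_G(H)$ in $C_Q(H)$ has finite index. The only real divergence is at that last step: the paper simply cites \cite[Lemma 3.10]{martineznucinkis} for the finiteness of $|C_Q(H):AC_G(H)|$, whereas you reprove it from scratch via the connecting map $C_Q(H)\to H^1(H,A)$ and the finiteness of $H^1(H,A)$ (bounded exponent plus finite Pr\"ufer rank). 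This makes your argument self-contained, at the cost of one small imprecision: the connecting map $q\mapsto [h\mapsto g^{-1}g^h]$ is in general not a homomorphism but a derivation (crossed homomorphism) with respect to the action of $C_Q(H)$ on $H^1(H,A)$ induced by conjugation, since $A$ need not be central in $G$; one computes $\delta(q_1q_2)=\delta(q_1)^{q_2}+\delta(q_2)$. This does not damage the argument --- the preimage of $0$ under a derivation into a finite module is still a subgroup of finite index (pass first to the finite-index subgroup acting trivially on $H^1(H,A)$, where $\delta$ becomes an honest homomorphism) --- but you should state it correctly. The final transfer of finite generation and $d$-tameness across a finite-index inclusion of the acting group is standard and is exactly what the paper also uses implicitly.
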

\begin{proof} By Lemma \ref{pos}, $A_0=C_A(H)$ is $d$-tame as a $\Z C_Q(H)$-module. In the
finite Pr\"ufer rank case it suffices to take into account that by
\cite[Lemma 3.10]{martineznucinkis}  the index $| C_Q(H):AC_G(H)| $
is finite, so $A_0$ is also $d$-tame as a $\Z [C_G(H)/C_A(H)]$-module.

Assume now that $G=Q\ltimes A.$ Then $C_G(H)=C_Q(H)\ltimes A_0$ so the result also follows.
\end{proof}

\noindent For a real number $r$ denote by $\lfloor {r}\rfloor$  the integral part of $r$. This  is the unique integer  $\lfloor {r}\rfloor$ such that $r - \lfloor {r} \rfloor \in [0, 1)$.

\begin{corollary}\label{fpncentralisers} Let $G$ be a metabelian group of type $\FP_n$ with finite Pr\"ufer rank. Let $H$ be a finite group acting on $G$. Then $C_G(H)$ is
of type $\FP_d$ for $d=\lfloor \frac{n}{| H |}\rfloor$. 
\end{corollary}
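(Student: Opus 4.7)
The plan is to reduce the statement to Åberg's theorem (the $\FP_m$-Conjecture for metabelian groups of finite Prüfer rank, cited in the Preliminaries) by way of Proposition \ref{pos2}. Since $G$ is metabelian, there is a short exact sequence $1 \to A \to G \to Q \to 1$ with $A = G'$ and $Q = G/G'$ both abelian, and $G$ finitely generated. Because $G$ has finite Prüfer rank and is of type $\FP_n$, Åberg's theorem tells us that $A$ is $n$-tame as a $\BZ Q$-module. This places us exactly in hypothesis (i) of Proposition \ref{pos2}.

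Next, set $d = \lfloor n/|H| \rfloor$, so that $d|H| \le n$. Applying Proposition \ref{pos2} with this $d$, I obtain that $C_A(H)$ is finitely generated and $d$-tame as a $\BZ[C_G(H)/C_A(H)]$-module.

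To conclude that $C_G(H)$ is of type $\FP_d$, I would consider the short exact sequence
\[
1 \to C_A(H) \to C_G(H) \to C_G(H)/C_A(H) \to 1.
\]
Both end-terms are abelian: $C_A(H) \le A$ is abelian, and $C_G(H)/C_A(H) = C_G(H)/(A \cap C_G(H))$ embeds into $G/A = Q$. Since $C_A(H)$ is finitely generated as a module over the finitely generated abelian group $C_G(H)/C_A(H)$, the group $C_G(H)$ is itself finitely generated. It also has finite Prüfer rank as a subgroup of $G$. Hence the extension meets the hypotheses of Åberg's theorem, and the $d$-tameness of $C_A(H)$ over $\BZ[C_G(H)/C_A(H)]$ established in the previous step yields that $C_G(H)$ is of type $\FP_d$, as required.

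The substantive work is all contained in Proposition \ref{pos2}; the rest is bookkeeping, using Åberg's theorem once in each direction (first to extract $n$-tameness from $\FP_n$ for $G$, then to recover $\FP_d$ from $d$-tameness for $C_G(H)$). The only point requiring care is to check that the extension for $C_G(H)$ still has abelian subquotients and retains finite Prüfer rank, but this is immediate from the fact that taking fixed points preserves both properties when passing to subgroups of $G$.
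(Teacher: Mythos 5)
Your proof is correct and follows the same route as the paper, which simply cites Proposition \ref{pos2} together with {\AA}berg's resolution of the $\FP_m$-Conjecture for metabelian groups of finite Pr\"ufer rank; you have merely spelled out the bookkeeping (the choice $A=G'$, the exact sequence for $C_G(H)$, and the finite generation and finite Pr\"ufer rank of $C_G(H)$) that the paper leaves implicit.
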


\begin{proof}
This follows from Proposition \ref{pos2} and the fact that the $\FP_m$-Conjecture holds for metabelian groups of finite Pr\"ufer rank \cite{aberg}.
\end{proof}

\begin{proposition}\label{conjclasses}
Let $G$ be a soluble-by-finite group of finite Pr\"ufer rank. Then there is a bound on the orders of the finite subgroups of $G$. Furthermore, $G$ has finitely many conjugacy classes of finite subgroups.
\end{proposition}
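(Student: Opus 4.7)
The approach is to reduce the problem in two steps — first from soluble-by-finite to soluble, and then via virtual torsion-freeness to a torsion-free subgroup of finite index — at which point both conclusions are essentially immediate.

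First I would pass from $G$ to a soluble normal subgroup $N\trianglelefteq G$ with $k=[G:N]$ finite, available by the soluble-by-finite hypothesis. Any finite subgroup $F\leq G$ satisfies $[F:F\cap N]\leq k$, so a uniform bound on the orders of finite subgroups of $N$ gives, after multiplication by $k$, a uniform bound for $G$. Likewise, only finitely many $G$-conjugacy classes of finite subgroups of $G$ can restrict to any given $N$-conjugacy class of finite subgroups of $N$, so finitely many $N$-conjugacy classes of finite subgroups of $N$ yield finitely many $G$-conjugacy classes of finite subgroups of $G$. This reduces the problem to the soluble group $N$.

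Next, I would invoke the classical structural theorem (essentially due to Mal'cev; see, e.g., Robinson, \emph{A course in the theory of groups}, or Lennox--Robinson, \emph{The theory of infinite soluble groups}) that a soluble group of finite Pr\"ufer rank contains a torsion-free subgroup $T$ of finite index. Every finite subgroup $F$ of $N$ then embeds injectively into the finite group $N/T$, yielding the uniform bound $|F|\leq [N:T]$ at once. For the finitely-many-conjugacy-classes assertion, each finite subgroup of $N$ is determined up to $N$-conjugacy by (i) its image $H$ in $N/T$, of which there are only finitely many choices, and (ii) its class in the non-abelian $H^1(H,T)$ parametrising complements to $T$ in $HT\leq N$; standard finiteness results for this cohomology, with $H$ finite and $T$ a finitely generated torsion-free soluble group of finite rank, complete the count.

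The main obstacle is the invocation of virtual torsion-freeness for $N$: this is the one genuinely nontrivial structural input, and it is where the full strength of the finite-Pr\"ufer-rank hypothesis (together with the implicit finite generation supplied by the context of the paper, needed to avoid counterexamples such as $\Z(p^\infty)$) is actually used. Everything else is bookkeeping with group extensions and a short cohomological finiteness count.
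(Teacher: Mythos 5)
Your argument has two genuine gaps. The more serious one is the very first reduction: the claim that ``only finitely many $G$-conjugacy classes of finite subgroups of $G$ can restrict to any given $N$-conjugacy class of finite subgroups of $N$'' is false in general. Leary and Nucinkis (cited in this paper's bibliography) construct groups $G$ with a \emph{torsion-free} normal subgroup $N$ of finite index --- so $N$ has exactly one conjugacy class of finite subgroups --- while $G$ has infinitely many conjugacy classes of finite subgroups. So finiteness of the number of conjugacy classes does not pass formally from a finite-index subgroup to the ambient group, and the same objection applies to your second passage, from the torsion-free $T$ up to $N$. That step is where all the work actually lies, and it cannot be delegated to ``standard finiteness results'': the non-abelian $H^1(H,T)$ for $T$ torsion-free soluble of finite rank has to be controlled by an induction along a normal series of $T$ with abelian factors $A$ (using that $H^1(H,A)$ is a bounded abelian group of finite Pr\"ufer rank, hence finite), together with an argument about which finite subgroups of the quotient actually lift. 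This induction, run with the bound on orders of finite subgroups as an input, is essentially the content of the Mart\'inez-P\'erez--Nucinkis result (their Theorem 2.4) that the paper's proof simply invokes after first establishing the bound.

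The second gap is the structural input. ``A soluble group of finite Pr\"ufer rank contains a torsion-free subgroup of finite index'' is not a theorem, and not only because of $\Z(p^\infty)$: even finitely generated soluble groups of finite Pr\"ufer rank can fail to be virtually torsion-free. Hall-type examples --- e.g.\ a suitable quotient of $UT_3(\Z[1/p])\rtimes\Z$, which is generated by three elements, nilpotent-by-cyclic, of finite Pr\"ufer rank, and has $\Z(p^\infty)$ in its centre --- show this, so finite generation does not rescue the citation and the hypothesis that actually excludes such groups must be identified and used. The paper takes a different route here: it quotes that a finitely generated soluble group of finite Pr\"ufer rank is minimax and extracts the bound on the orders of finite subgroups from the proof of Robinson's Theorem 10.33, rather than from a torsion-free subgroup of finite index. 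Your order-bound argument ($|F|\leq[N:T]$) would be fine if $T$ existed, but it is exactly the existence of $T$ that fails.
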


\begin{proof}   Let $G$ be a soluble group of finite Pr\"ufer rank. Then it is minimax; see  for example \cite[Exercise 14.1.4]{rob}. The proof of \cite[Theorem 10.33]{rob2} implies that $G$ has a bound on the orders of its finite subgroups. Hence the argument of \cite[Theorem 2.4]{martineznucinkis} can be applied implying that $G$ has finitely many conjugacy classes of finite subgroups.
\end{proof}

\noindent The above results  give a lower bound for the Bredon-type of metabelian groups. This bound turns out to be the best possible as the examples in the next sections show.

\begin{theorem}\label{metabelianbredonfpn} Let $G$ be a metabelian-by-finite group of finite
Pr\"ufer rank  of type $\FP_n$.  Then $G$ is of
type Bredon $\FP_d$ for $d=\lfloor \frac{n}{s} \rfloor,$ where $s$ denotes the upper bound on the orders of the finite subgroups of $G$.
\end{theorem}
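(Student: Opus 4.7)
The strategy is to reduce to the invariants established in Theorem \ref{bredonfpn}. That criterion requires two things: finitely many conjugacy classes of finite subgroups, and centralisers of finite subgroups of type $\FP_d$. The first is immediate from Proposition \ref{conjclasses}, since a metabelian-by-finite group is soluble-by-finite and has finite Pr\"ufer rank by hypothesis; this also supplies the uniform bound $s$ on the orders of finite subgroups. So the entire proof reduces to showing that for each finite subgroup $H\leq G$, the centraliser $C_G(H)$ is of type $\FP_d$ with $d=\lfloor n/s\rfloor$.

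To invoke Corollary \ref{fpncentralisers}, I need to be in the strictly metabelian setting, so I first replace $G$ by its metabelian core. Concretely, pick a metabelian subgroup of finite index in $G$ and intersect its (finitely many) conjugates to obtain a normal metabelian subgroup $N$ of finite index in $G$. Then $N$ inherits finite Pr\"ufer rank from $G$, and since $[G:N]<\infty$, being of type $\FP_n$ passes from $G$ to $N$. For any finite subgroup $H\leq G$, conjugation inside $G$ gives a well-defined action of $H$ on $N$, and the fixed-point subgroup for this action is precisely $C_N(H)=C_G(H)\cap N$.

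Now Corollary \ref{fpncentralisers} applies: $C_N(H)$ is of type $\FP_{\lfloor n/|H|\rfloor}$. Since $|H|\leq s$, we have $\lfloor n/|H|\rfloor\geq \lfloor n/s\rfloor=d$, so in particular $C_N(H)$ is of type $\FP_d$. Finally, $C_N(H)=C_G(H)\cap N$ has finite index in $C_G(H)$ because $N$ has finite index in $G$, and type $\FP_d$ is preserved under passage to overgroups of finite index; hence $C_G(H)$ is of type $\FP_d$. Together with the finiteness of the set of conjugacy classes of finite subgroups, Theorem \ref{bredonfpn} yields that $G$ is of type Bredon-$\FP_d$. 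The only mildly delicate point is the bookkeeping in reducing from the metabelian-by-finite hypothesis to an honest metabelian normal subgroup of finite index while preserving both the $\FP_n$ property and finite Pr\"ufer rank; once this reduction is in place, everything else is a direct concatenation of the earlier results.
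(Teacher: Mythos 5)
Your argument is correct and takes essentially the same approach as the paper: the paper's own proof is just the one-line instruction to combine Corollary \ref{fpncentralisers}, Proposition \ref{conjclasses} and Theorem \ref{bredonfpn}, leaving implicit the reduction to a normal metabelian subgroup of finite index that you spell out. The details you supply (passing to the normal core, letting a finite subgroup act by conjugation, and using finite-index invariance of $\FP_d$) are exactly the intended ones and match the paper's own remarks in Section \ref{Prufer} on invariance of $\FP_m$ under finite-index subgroups and overgroups.
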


\begin{proof}   Apply Corollary \ref{fpncentralisers}, Proposition \ref{conjclasses} and  Theorem \ref{bredonfpn}.
\end{proof}

\section{Examples of virtually metabelian groups}
\subsection{An example of prime characteristic} \label{prime}

In this section we construct examples of metabelian groups $G = A \rtimes Q$ 
 where $Q$ is free abelian
of finite rank and $A$ an infinite abelian $p$-group,  which show that the bound in Corollary \ref{fpncentralisers} can be sharp.
More precisely, for a certain integer $n$, these groups are of type $\FP_{n}$  but
admit an action of a finite cyclic group $H_0$ such that $C_G(H_0)$ is not of type $\FP_{d+1}$ for $d=\lfloor \frac{n}{\mid H_0\mid}\rfloor$.

\noindent
Let $\BF_p$ be the field with $p$ elements and $\BF_p \subset k$ be
a finite field extension of degree $m$;   $k$ is a field with $p^m$
elements. Then the Galois group $Gal(k \mid \BF_p)$ is  a cyclic group generated
by  the M\"obius map $\sigma$ sending an element to its $p$th
power. By \cite[Ch.~VIII,~Sec.~12,~Thm.~20]{Langbook}
there is an element $a \in k$ such that the set $\{ \sigma^j(a) \}_{0 \leq
j \leq m-1}$ is linearly independent over $\BF_p$.  Hence $k = \BF_p(a)$. Note
that
\begin{equation} \label{eq1}
b = tr(a) =  \sum_{0 \leq j \leq m-1} \sigma^ j(a) \in \BF_p \setminus \{ 0 \}.
\end{equation}
By multiplying $a$ with a non-zero element of $\BF_p$ we can assume that $b
= 1$.  Define the set
$$\{ a_1 = a, a_{i+1} = \sigma(a_i)  + a \}_{i \in \BN}.$$

\begin{lemma}   \label{new} The set $\{a_j \}_{1 \leq j \leq
mp}$ contains $mp$ different elements and $a_m = 1, a_{pm} = 0$.
\end{lemma}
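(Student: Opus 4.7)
The plan is to find a closed-form description for $a_j$ and then use the linear independence of $\{\sigma^i(a)\}_{0\leq i\leq m-1}$ over $\BF_p$ to force distinctness.

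First, from the recursion $a_{i+1}=\sigma(a_i)+a$ with $a_1=a$, a straightforward induction gives
$$a_i=\sum_{j=0}^{i-1}\sigma^j(a)\quad\text{for }1\leq i\leq m.$$
Since $\sigma$ generates $Gal(k\mid\BF_p)$, it has order $m$, and by (\ref{eq1}) (after rescaling so $b=1$) we obtain $a_m=tr(a)=1$.

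Next I would extend the recursion past $a_m$. Because $\sigma$ fixes $\BF_p$ pointwise, $\sigma(a_m)=1$, so $a_{m+1}=1+a=1+a_1$, and inductively
$$a_{km+r}=k+a_r\quad\text{for every }k\geq 0\text{ and every }1\leq r\leq m,$$
with the convention that $a_r$ for $r=m$ is $1$, so this is consistent with $a_{(k+1)m}=k+1$. In particular $a_{km}=k$, so $a_{pm}=p=0$ in characteristic $p$.

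The distinctness of $\{a_j\}_{1\leq j\leq mp}$ is the only part needing care. Suppose $a_j=a_{j'}$, and write $j=km+r$, $j'=k'm+r'$ with $0\leq k,k'\leq p-1$ and $1\leq r,r'\leq m$ (taking $k=p-1,r=m$ when $j=pm$). Using $a_m=1=\sum_{i=0}^{m-1}\sigma^i(a)$, every $a_{km+r}$ can be expanded in the $\BF_p$-basis $\{\sigma^i(a)\}_{0\leq i\leq m-1}$: the coefficient of $\sigma^i(a)$ in $k+a_r$ equals $k+1$ for $0\leq i\leq r-1$ and $k$ for $r\leq i\leq m-1$ (when $r=m$ all coefficients are $k+1$, as expected since $a_{(k+1)m}=k+1$). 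Comparing with the analogous expansion of $a_{j'}$ and using linear independence yields equalities in $\BF_p$ that force both $r=r'$ and $k\equiv k'\pmod p$, hence $j=j'$.

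The main technical obstacle is Step~3: the identity $1=\sum_i\sigma^i(a)$ means that integer scalars are \emph{not} independent of the ``high-degree'' basis vectors, so comparing coefficients requires splitting the basis into the ranges $\{0,\ldots,r-1\}$ and $\{r,\ldots,m-1\}$ and checking the boundary case $r=m$ separately; otherwise one could spuriously believe $a_j=a_{j'}$ for distinct indices. Once the bookkeeping is done correctly, linear independence together with $1\neq 0$ in $\BF_p$ eliminates all cases, giving the $mp$ distinct elements.
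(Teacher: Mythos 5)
Your proof is correct and follows essentially the same route as the paper: both rest on the closed form $a_i=\sum_{j=0}^{i-1}\sigma^j(a)$, the resulting periodicity $a_{km+r}=k+a_r$, and the linear independence of $\{\sigma^j(a)\}$ over $\BF_p$ (the paper phrases the last step as the linear independence of $a_1,\ldots,a_m$, so that $\{a_j\}_{1\le j\le mp}$ is a disjoint union of the $m$ cosets $a_i+\BF_p$, while you carry out the equivalent coefficient comparison explicitly).
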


\begin{proof} Note that $a_m = b = 1$ and so $a_{m+i} = a_i + 1$ for every $i$. 
Then $a_{pm} = a_m + (p-1) = 1 + (p-1) = 0$.
Furthermore $a_1 = a, a_2, \ldots, a_m = 1$ are linearly independent over $\BF_p$ and thus 
$$\{a_j \}_{1 \leq j \leq
mp} = \cup_{1 \leq i \leq m} \{ a_i + \BF_p \}$$
 contains   $mp$ different elements.
\end{proof}

\noindent
Define $A$ to be the localization $k[x_1,  1/ (x_1 + a_{j})]_{1 \leq j \leq
mp}$ of the polynomial ring $k[x_1]$. Let $H$ be a cyclic group of order
$pm$ with a generator $\mu$  acting on the  field $k(x_1)$ in
the following way : the restriction of $\mu$ to the field $k$ is the
M\"obius map $\sigma$, and $\mu(x_1) = x_1 +   a$. Thus $\mu^i(x_1) = x_1 +
a_i$. In particular $\mu^m(x_1) = x_1 + 1$ and by Lemma \ref{new}  $\mu$ has order $pm$.
Moreover we get an induced action of $H$ on $A$.

\noindent
Consider the split extension $G = A \rtimes Q$, where $Q$ is a free abelian
group with generators
$q_1, \ldots, q_{pm}$ and $q_i$ acts on $A$ by conjugation as multiplication with  $x_1 +
a_i$.
The generator $\mu$ of the cyclic group $H$ acts on $Q$ by sending $q_i$
to $q_{i+1}$, where $q_{pm+1} = q_1$. This, together with the above action
of $H$ on $A$ induces an action of $H$ on $G$.

\begin{lemma} \label{tame1} The $\BZ Q$-module $A$ is $(pm)$-tame but not $(pm+1)$-tame. Hence $G$ is of type $FP_{pm}$ but not of type $FP_{pm+1}$.
\end{lemma}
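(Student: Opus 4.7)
The plan is to compute $\Sigma_A^c(Q)$ explicitly from the valuations of $k(x_1)$ trivial on $k$, and then deduce both the $m$-tameness statement and the $\FP_m$ type of $G$ from this computation.

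First I would write down explicit characters in $\Sigma_A^c(Q)$ coming from valuations: the degree-at-infinity valuation $v_\infty$ gives a character $\chi_\infty$ with $\chi_\infty(q_i) = -1$ for all $i$, and for each $j \in \{1, \ldots, pm\}$ the $(x_1+a_j)$-adic valuation $v_j$ yields the dual-basis character $\chi_j(q_i) = \delta_{ij}$. To see that each such $[\chi_v]$ lies in $\Sigma_A^c(Q)$, I would note that elements of $\BZ Q_{\chi_v}$ act on $A$ by multiplication by elements of non-negative $v$-value, so any $\BZ Q_{\chi_v}$-submodule generated by finitely many $g_1,\ldots,g_r \in A$ has $v$-values bounded below by $\min_i v(g_i)$; but $A$ contains elements of arbitrarily negative $v$-value (the powers $x_1^n$ for $v_\infty$ and $(x_1+a_j)^{-n}$ for $v_j$), so $A$ cannot be finitely generated over $\BZ Q_{\chi_v}$.

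Next I would show that these $pm+1$ characters form the entire invariant. Since $A$ is a localization of the PID $k[x_1]$, it is a cyclic $\BZ Q$-module of Krull dimension one, and by a theorem of Bieri--Groves $\Sigma_A^c(Q)$ is a finite set of rational points. Every $k$-trivial valuation of $k(x_1)$ is either $v_\infty$ or the $P$-adic valuation for some irreducible monic $P \in k[x_1]$, and $v_P$ induces a non-trivial character on $Q$ only when $P = x_1 + a_j$ for some $j$; hence
\[
\Sigma_A^c(Q) = \{[\chi_\infty], [\chi_1], \ldots, [\chi_{pm}]\}.
\]
Now count linear dependences: $\chi_1,\ldots,\chi_{pm}$ are linearly independent and $\chi_\infty = -(\chi_1 + \cdots + \chi_{pm})$, so the only positive linear dependence in $\Sigma_A^c(Q)$ is, up to positive scalars, $\chi_\infty + \chi_1 + \cdots + \chi_{pm} = 0$, with exactly $pm+1$ summands. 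Thus no dependence with at most $pm$ terms exists, proving that $A$ is $(pm)$-tame but not $(pm+1)$-tame.

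The ``hence'' then follows from the result of Bieri--Groves for split metabelian extensions: for $G = Q \ltimes A$ with $Q$ finitely generated free abelian, $G$ is of type $\FP_m$ if and only if $A$ is $m$-tame as a $\BZ Q$-module. The main obstacle is pinning down $\Sigma_A^c(Q)$ exactly, that is, ruling out characters beyond those produced by the valuations above; this relies on the Bieri--Groves structure theorem identifying $\Sigma^c$ with a finite set of rational points for modules of Krull dimension one, together with the classification of $k$-trivial valuations on $k(x_1)$.
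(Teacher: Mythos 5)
Your computation of $\Sigma_A^c(Q)$ is essentially the paper's own argument: both proofs use that $A$ is a cyclic $\BZ Q$-module so that, by the Bieri--Groves valuation theorem, every point of $\Sigma_A^c(Q)$ is induced by a real valuation on $A$; both then use that any real valuation vanishes on $k^\times$ (as $k$ is finite) to see that the induced character is determined by its values on the basis $\{x_1+a_j\}_{1\le j\le pm}$, yielding exactly the $pm$ ``coordinate'' points together with $[\chi_\infty]$, and hence that $A$ is $(pm)$-tame but not $(pm+1)$-tame. You are somewhat more explicit in exhibiting the valuations and in checking that each candidate point really lies in $\Sigma_A^c(Q)$; that part is fine.

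The gap is in the final step. There is no theorem of Bieri--Groves asserting that a split extension $G=Q\ltimes A$ with $Q$ finitely generated free abelian is of type $\FP_m$ if and only if $A$ is $m$-tame: that equivalence is precisely the $\FP_m$-Conjecture, which is open in general (only $m=2$, due to Bieri--Strebel, holds unconditionally). Nor can you fall back on {\AA}berg's theorem for metabelian groups of finite Pr\"ufer rank, because here $A$ is an infinite-dimensional $\BF_p$-vector space, so $G$ has infinite Pr\"ufer rank. The paper closes exactly this gap by citing Kochloukova's Corollary~C, which proves the conjecture (in both directions) for extensions in which $A$ is a finitely generated $\BZ Q$-module of Krull dimension $1$ and of finite exponent as an additive group; both hypotheses hold here since $A$ is a localisation of $k[x_1]$ and has exponent $p$. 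Without invoking an applicable proved case of the conjecture, neither the positive assertion that $G$ is of type $\FP_{pm}$ nor the negative assertion that it is not of type $\FP_{pm+1}$ follows from your tameness computation.
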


\begin{proof} Note first that  $A$ is a cyclic $\BZ Q$-module, where $Q$
acts via conjugation. So $A$ is a quotient ring of $\BZ Q$ and the
embedding of $Q$ in $\BZ Q$ induces an embedding of $Q$ in $A$. 

Let $\chi : Q \to \BR$ be a non-zero character such that
$[\chi] \in \Sigma_A^c(Q)$. Then by \cite[Thm.~8.1]{BieriGroves1} and the fact that $A$ is a cyclic $\BZ Q$-module there is a real
valuation $v : A \to \BR \cup \{ \infty \}$ such that the restriction of
$v$ to $Q$  extends $\chi$. 
\noindent
Since
$k$ is finite $v(k \setminus 0) = 0$ and so if $v$ has a positive value on
one of the elements of the basis $Y = \{ x_1 + a_j \}_{1 \leq j \leq pm}$ of
$Q$ as embedded in $A$, then $v$ has zero value on  all other
elements of the same basis. 

If $v$ has a negative value on one of the
elements of $Y$ then the $v$-value on all elements of $Y$ is the same. Thus $\Sigma_A^c(Q)$ has exactly $pm + 1$ points, $A$ is
$(pm)$-tame but is not $(pm + 1)$-tame as a $\BZ Q$-module.

Finally note that the $\FP_n$-Conjecture for metabelian groups holds for groups which are extensions of $A$ by $Q$, where  $Q$ is a finitely generated abelian group and $A$ is a finitely generated $\BZ Q$-module of Krull dimension 1 and of finite exponent as an additive group \cite[Cor.~C]{Koch}.
\end{proof}

\noindent
Define $H_0$ to be the subgroup of $H$ generated by $\mu^{d}$ for some
positive divisor $d$ of $m$. Thus $[H : H_0] = d$.
We want to study the centralizer $C_G(H_0)$ of $H_0$ in $G$. Note that
$C_G(H_0) = C_A(H_0) \rtimes C_Q(H_0)$. We write $A_0$ for $C_A(H_0)$ and
$Q_0$ for $C_Q(H_0)$ and consider $A_0$ as a $\BZ Q_0$-module via
conjugation. Note that $A_0 \not= 0$ since $\prod_{h \in H} h(x_1) $ is an element of $A_0$ and a non-zero polynomial of degree
$pm$.
Since the generator $\mu$ of $H$ cyclicly permutes the basis $q_1, \ldots, q_{pm}$ of $Q$,  $Q_0$ is a free abelian group of rank $d$.

\begin{lemma} \label{dplus1} The $\BZ Q_0$-module $A_0$ is not
$(d+1)$-tame.
\end{lemma}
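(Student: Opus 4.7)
The plan is to exhibit $d+1$ non-zero characters of $Q_0$ that lie in $\Sigma_{A_0}^c(Q_0)$ and sum to zero. Since $\mu^d$ acts on the basis $\{q_1,\ldots,q_{pm}\}$ of $Q$ by the cyclic shift $q_i \mapsto q_{i+d}$ (indices mod $pm$), its orbits on this basis are $O_r := \{r,\, r+d,\, \ldots,\, r+(pm/d-1)d\}$ for $r = 1,\ldots,d$, so $Q_0$ is free abelian of rank $d$ with basis $\tilde q_r := \prod_{i \in O_r} q_i$. I take $\chi'_r \in S(Q_0)$ to be the character given by $\chi'_r(\tilde q_s) = \delta_{rs}$, and set $\chi'_\infty := -(\chi'_1 + \cdots + \chi'_d)$. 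These are $d+1$ pairwise distinct non-zero characters satisfying $\chi'_1 + \cdots + \chi'_d + \chi'_\infty = 0$, so the task reduces to showing each lies in $\Sigma_{A_0}^c(Q_0)$.

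Under the embedding $Q \hookrightarrow A$ induced by the action of $Q$ on $1 \in A$, the generator $\tilde q_r$ maps to $u_r := \prod_{i \in O_r}(x_1+a_i)$; since $H_0$ permutes the factors of $u_r$ and $u_r$ is a unit in $A$, both $u_r$ and $u_r^{-1}$ lie in $A_0$. For $[\chi'_r]$, pick any $j \in O_r$ and restrict the $(x_1+a_j)$-adic valuation $v$ on $k(x_1)$ to $A_0$: then $v$ vanishes on $k^{\times}$, $v(u_r)=1$, and $v(u_s)=0$ for $s \neq r$, so $v$ is non-negative on the image of $\BZ(Q_0)_{\chi'_r}$ in $A_0$. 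If $A_0$ were generated as a $\BZ(Q_0)_{\chi'_r}$-module by finitely many elements $z_1,\ldots,z_N$, then $v(A_0) \geq \min_i v(z_i) > -\infty$, contradicting the fact that $u_r^{-n} \in A_0$ satisfies $v(u_r^{-n}) = -n \to -\infty$.

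For $[\chi'_\infty]$ I use instead the degree-at-infinity valuation $v_\infty$ on $k(x_1)$, defined by $v_\infty(x_1) = -1$, which gives $v_\infty(u_r) = -pm/d$ for every $r$. The submonoid $(Q_0)_{\chi'_\infty}$ consists of products $\prod_r \tilde q_r^{a_r}$ with $\sum_r a_r \leq 0$, whose images in $A_0$ have $v_\infty$-value $-(pm/d)\sum_r a_r \geq 0$; the same finite-generation/valuation argument then derives a contradiction from $v_\infty(u_r^n) = -npm/d \to -\infty$. Hence all $d+1$ characters lie in $\Sigma_{A_0}^c(Q_0)$, and $A_0$ is not $(d+1)$-tame. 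The only non-routine step is the structural observation that $A_0$ contains each $u_r^{\pm 1}$, despite $\chi'_r(\tilde q_r^{-1}) = -1$; once this is in hand, the valuation estimates above are immediate.
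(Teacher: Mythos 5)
Your argument is correct, but it takes a genuinely different route from the paper's. You locate $d+1$ explicit points of $\Sigma_{A_0}^c(Q_0)$ with $0$ in their convex hull: the duals $\chi'_1,\dots,\chi'_d$ of the orbit-product basis $\tilde q_1,\dots,\tilde q_d$ of $Q_0$ together with $-(\chi'_1+\dots+\chi'_d)$, certified by restricting to $A_0$ the $(x_1+a_j)$-adic valuations and the valuation at infinity of $k(x_1)$. The decisive structural point, which you verify, is that the orbit products $u_r=\prod_{i\in O_r}(x_1+a_i)$ and their inverses lie in $A_0$ and take arbitrarily negative values under the relevant valuation, while the valuation is non-negative on the image of the monoid $(Q_0)_{\chi}$; this makes the failure of finite generation over $\BZ (Q_0)_{\chi}$ elementary. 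In effect you carry out for $(A_0,Q_0)$ the same explicit computation of the complement that the paper performs for $(A,Q)$ in Lemma \ref{tame1}. The paper's own proof of Lemma \ref{dplus1} is instead indirect: it first observes that $A_0$ is infinite dimensional (via $T_0\prod_{h\in H}h(x_1)\subseteq A_0$), then assumes $(d+1)$-tameness, upgrades this to $\infty$-tameness because $d$ equals the rank of $Q_0$, invokes the Bieri--Groves theorem \cite{BieriGroves2} to conclude that all tensor powers $\otimes^s A_0$ are finitely generated over $\BZ Q_0$, and reaches a contradiction by comparing Krull dimensions using \cite[Lemma~1]{Brookes}. Your approach is more elementary and self-contained (no tensor powers, no Krull dimension) and gives the sharper, explicit information that $\Sigma^c_{A_0}(Q_0)$ contains these $d+1$ points; the paper's approach buys the convenience of never having to exhibit a single point of $\Sigma^c_{A_0}(Q_0)$, needing only the qualitative fact that $A_0$ is infinite dimensional. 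Both arguments are valid.
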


\begin{proof}
Let $T_0$ be the image of $Q_0$ under the embedding of $Q$ in $A$.
Then there is no bound on the degree, as polynomials in the variable
$x_1$,  of the elements of $T_0 \prod_{h \in H} h(x_1) $  and    $T_0
\prod_{h \in H} h(x_1) \subseteq A_0$. In particular $A_0$ is not finite
dimensional.

\noindent Suppose now that $A_0$ is $(d+1)$-tame as a $\BZ Q_0$-module. 
Note that by Proposition \ref{pos2} 
$A_0$ is finitely generated as a $\BZ Q_0$-module.
Since $d$ is
the rank of $Q_0$ we get that $A_0$ is $\infty$-tame as a $\BZ
Q_0$-module. Then  the main result of \cite{BieriGroves2} implies that for all $s$ the  tensor powers $\otimes^s 
A_0$ are finitely generated as  $\BZ Q_0$-modules with diagonal $Q_0$-action. Hence they are also finitely generated as  $\BF_p Q_0$-modules.  In particular, the Krull dimension of  $\otimes^s  A_0$  is at
most the Krull dimension of  $\BF_p Q_0$, which is the rank of $Q_0$. On
the other hand the Krull dimension of $\otimes^s  A_0$  is at least $s$
times the Krull dimension of $A_0$ \cite[Lemma~1]{Brookes} and the Krull dimension of $A_0$ is not
zero since $A_0$ is infinite dimensional. Thus $s$ cannot be arbitrarily
large, giving a contradiction.

\end{proof}

\begin{corollary} The group $C_G(H_0)$ is of type $\FP_d$ but not of type $\FP_{d+1}$.
\end{corollary}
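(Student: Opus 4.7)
The plan is to combine Proposition \ref{pos2} with the known case of the $\FP_m$-conjecture already used in Lemma \ref{tame1}. Since $\mu$ cyclically permutes the basis $\{q_1,\dots,q_{pm}\}$ of $Q$ and $H_0=\langle\mu^{d}\rangle$ has index $d$ in $H$, the centralizer $Q_0=C_Q(H_0)$ is free abelian of rank $d$, and $C_G(H_0)=A_0\rtimes Q_0$. By Lemma \ref{tame1} the $\BZ Q$-module $A$ is $n$-tame for $n=pm$, and since $d\,|H_0|=d\cdot(pm/d)=pm=n$, Proposition \ref{pos2} (case (ii)) applies and yields that $A_0$ is finitely generated and $d$-tame as a $\BZ Q_0$-module.

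For the positive direction I would invoke \cite[Cor.~C]{Koch}, as in the proof of Lemma \ref{tame1}: this result establishes the $\FP_m$-conjecture for split extensions $A_0\rtimes Q_0$ whenever $A_0$ is a finitely generated $\BZ Q_0$-module of Krull dimension one and of finite exponent as an abelian group. Exponent $p$ is inherited from $A$, which is an $\BF_p$-algebra. For the Krull dimension, observe that $H_0$ acts on the Noetherian ring $A$ as a finite group of automorphisms, so $A$ is a finitely generated, and hence integral, module over its fixed subring $A_0$; consequently $\dim A_0=\dim A=1$, the lower bound $\dim A_0\geq 1$ following from the fact, already used in the proof of Lemma \ref{dplus1}, that $A_0$ contains infinitely many $\BF_p$-linearly independent elements. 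Combined with the $d$-tameness established above, this yields that $C_G(H_0)$ is of type $\FP_d$.

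For the negative direction I would apply the easy, unconditionally valid direction of the $\FP_m$-conjecture: if $C_G(H_0)=A_0\rtimes Q_0$ were of type $\FP_{d+1}$, then $A_0$ would necessarily be $(d+1)$-tame as a $\BZ Q_0$-module, contradicting Lemma \ref{dplus1}. The main technical point is therefore the verification of Krull dimension one for $A_0$ needed to apply \cite[Cor.~C]{Koch}; once this is in place, the rest is a formal combination of the preceding results.
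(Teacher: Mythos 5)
Your argument is correct and follows essentially the same route as the paper: Proposition \ref{pos2} gives $d$-tameness of $A_0$, Lemma \ref{dplus1} rules out $(d+1)$-tameness, and \cite[Cor.~C]{Koch} converts tameness into the homological finiteness properties. Two remarks. First, your explicit verification that $A_0$ has Krull dimension $1$ --- via integrality of $A$ over the fixed ring $A_0$, combined with the infinite-dimensionality of $A_0$ already recorded in the proof of Lemma \ref{dplus1} --- is a worthwhile supplement, since the paper applies \cite[Cor.~C]{Koch} without spelling this hypothesis out. Second, for the negative direction you appeal to ``the easy, unconditionally valid direction of the $\FP_m$-Conjecture''; that general implication is not something the paper relies on (in the Pr\"ufer-rank example the authors are careful to obtain it from {\AA}berg's theorem \cite{aberg} rather than from the conjecture itself), and you should not assert its unconditional validity without a reference. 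The slip is harmless here, because \cite[Cor.~C]{Koch} --- whose hypotheses you have already checked --- establishes both directions of the conjecture for finitely generated modules of Krull dimension $1$ and finite exponent, and this is exactly how the paper deduces that $C_G(H_0)$ is not of type $\FP_{d+1}$ from Lemma \ref{dplus1}.
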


\begin{proof} 
By Proposition \ref{pos2} and Lemma \ref{dplus1} $A_0$ is $d$-tame but not $(d+1)$-tame as a $\BZ Q_0$-module.
The claim now follows from the fact that the $\FP_m$-Conjecture for metabelian groups holds for groups which are extensions of $A_0$ by $Q_0$, where  $Q_0$ is a finitely generated abelian group and $A_0$ is a finitely generated $\BZ Q_0$-module of Krull dimension 1 and of finite exponent as an additive group \cite[Cor.~C]{Koch}.
\end{proof}

\subsection{An example of finite Pr\"ufer rank} \label{torsionfree}
The group $A$ constructed in the previous section was of infinite Pr\"ufer rank.  We construct  examples similar to the above, but
now $A$ will be a torsion free group of finite Pr\"ufer rank. Moreover, the finite group $H_0$ acting on $G$ will be a subgroup
of the Galois group of a finite extension of $\BQ$. Note also that any finite group is a subgroup of some symmetric group and any symmetric group is the Galois group of some extension of $\BQ$.

Let $K:\BQ$ be a Galois extension
with Galois group $H$. Choose a primitive element $\zeta$ with
minimal polynomial $f(x),$ note that $\zeta$ can chosen to be integral
over $\BZ$ so we may assume $f(x)\in\BZ[x].$ Let $O_K$ be the
integral closure of $\BZ$ in $K$ and $d_K$ its discriminant.

By \cite[Lemma~4.1]{DesiBritaConchita} there are infinitely many primes $q$ for which there exists some
integer $k_q$ with $q|f(k_q)$ so we may take
$q,k:=k_q$ such that 
$$q|f(k), q\nmid |O_K/\BZ[\zeta]|  \hbox{ and }q\nmid d_K.$$
By Dedekind's Theorem the last condition implies that $q$ is not ramified in $O_K$.
Moreover, the extension $K:\BQ$ is Galois and therefore
$$qO_K=I_1\ldots I_{t_0}$$
for distinct prime ideals $I_i$ which form a single
$H$-orbit. In particular, $O_K/qO_K$ is a product of fields. Put $I=I_1$. Then for any $i$
$$\delta=[O_K/I:\BF_q]=[O_K/I_i:\BF_q],$$
and $n=[K:\BQ]= \delta t_0$. 

\noindent
The condition
$q\nmid |O_K/\BZ[\zeta]|$ implies
$$O_K/qO_K\cong\BZ[\zeta]/q\BZ[\zeta]=\BZ[x]/(q,f(x))=
\BF_q[x]/(\bar f(x)).$$ 
As $x-\bar k\mid \bar f(x)$ in $\BF_q[x]$, we deduce that
$\delta = 1$. Therefore $n= t_0$ and 
$$qO_K=\prod_{t\in H}I^t$$ with $I\unlhd O_K$ prime and the $I^t$ are all
distinct.

\noindent
Let $r$ be the ideal class number of the Dedekind domain $O_K$. Then
$$I^r=\alpha O_K$$ for some $\alpha\in O_K.$

\noindent
Take any natural prime number $p\neq q$. Then $$pO_K=J_1^s\ldots J_t^s$$
where the $J_i$ are prime ideals which form a single $H$-orbit.

\bigskip\noindent
Now, let $Q=\langle q_t \rangle_{t \in H}$ be free abelian of rank
$n=| H | $ and let
$$A=O_K[\alpha^t/p,p/\alpha^t]_{t\in H}.$$ 
Let
each $q_t$ act on $A$ by multiplication by $\alpha^t/p.$ The group
$H$ acts on
 $Q$  by $q_t^{t_1} = q_{t t_1}$, where $t, t_1 \in H$. The group $H$ also acts on $K,$ and $A$ is invariant under this action.
 Moreover the actions of $Q$ and $H$ on $A$ are compatible so this gives an action of $H$ on the group $G:=Q\ltimes A.$

\begin{proposition}\label{fpn} The group $G$ is of type $\FP_n$ and not  of type $\FP_{n+1}.$
\end{proposition}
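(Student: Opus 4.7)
The plan is to invoke Åberg's theorem \cite{aberg} establishing the $\FP_m$-Conjecture for metabelian groups of finite Pr\"ufer rank. First I verify the hypothesis: the additive group of $A$ embeds in $K$ and hence is torsion-free of $\BZ$-rank at most $[K:\BQ]=n$, while $Q$ is free abelian of rank $n$, so $G=Q\ltimes A$ is metabelian of finite Pr\"ufer rank. Thus $G$ is of type $\FP_m$ if and only if $A$ is $m$-tame as a $\BZ Q$-module, and the proposition reduces to showing that $A$ is $n$-tame but not $(n+1)$-tame.

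Next I compute $\Sigma_A^c(Q)$. Since $q_t$ acts on $A$ by multiplication by $\alpha^t/p$, the module $A$ is cyclic over $\BZ Q$ with generator $1 \in O_K$ (as may be verified using norm-type identities among the $\alpha^t/p$'s to recover $O_K$ inside $\BZ Q\cdot 1$). By \cite[Thm.~8.1]{BieriGroves1}, $\Sigma_A^c(Q)$ consists of classes of non-zero characters $\chi: Q \to \BR$ extending to a real (non-archimedean) valuation of $A$; since $A$ is a domain with fraction field $K$, these come from the $\mathfrak{p}$-adic valuations of $K$ for primes $\mathfrak{p}$ of $O_K$. Writing $\{e_t^*\}_{t \in H}$ for the dual basis of $\{q_t\}_{t \in H}$ and $\mathbf{1}=\sum_{t\in H}e_t^*$: the prime $I^{t_0}$ above $q$ gives $v_{I^{t_0}}(\alpha^t/p)=r\,\delta_{t,t_0}$ (since $\alpha O_K=I^r$ and $v_{I^{t_0}}(p)=0$), yielding the character $r\,e_{t_0}^*$ and hence the $n$ classes $[e_t^*]_{t \in H}$; each prime $J_i$ above $p$ gives $v_{J_i}(\alpha^t/p)=-s$ for all $t$, yielding the single class $[-\mathbf{1}]$; every other prime of $O_K$ gives the trivial character. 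Therefore $\Sigma_A^c(Q)$ has exactly $n+1$ classes.

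The tameness claims now follow from a short calculation. With representatives $r\,e_t^*$ ($t \in H$) and $-r\,\mathbf{1}$, the $n+1$ classes sum to $r\,\mathbf{1}-r\,\mathbf{1}=0$, so $A$ is not $(n+1)$-tame. For $n$-tameness, any $n$ of the $n+1$ classes either consist of all the $q$-adic ones---in which case $\sum_{t \in H} c_t e_t^*$ with $c_t>0$ is non-zero by linear independence of $\{e_t^*\}$---or omit some $q$-adic class $[e_{t_0}^*]$ and include $[-\mathbf{1}]$, in which case a relation $\sum_{t \neq t_0} c_t e_t^* - d\,\mathbf{1}=0$ forces $d=0$ by reading off the $e_{t_0}^*$-coordinate, contradicting $d>0$. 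The main obstacle in this plan is verifying the identification of $\Sigma_A^c(Q)$ via \cite[Thm.~8.1]{BieriGroves1}---specifically, the cyclicity of $A$ as a $\BZ Q$-module and the fact that only non-archimedean valuations on the fraction field $K$ contribute to $\Sigma_A^c(Q)$---but once this is granted, the tameness computation is routine, and Åberg's theorem delivers the proposition.
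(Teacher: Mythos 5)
Your proposal follows essentially the same route as the paper: identify $\Sigma_A^c(Q)$ with the characters induced by the discrete valuations of $K$, compute that the primes over $q$ contribute the $n$ classes $[e_t^*]$ and the primes over $p$ the single class $[-\mathbf{1}]$, conclude that $A$ is $n$-tame but not $(n+1)$-tame, and invoke \AA berg's theorem. The one point where you diverge is the claim that $A$ is cyclic as a $\BZ Q$-module (which you rightly flag as unverified, and which is not clear, since $\BZ Q\cdot 1=\BZ[(\alpha^t/p)^{\pm 1}]_{t\in H}$ need not contain all of $O_K$); the paper sidesteps this by observing that $A\cong O_KQ/P$ is cyclic over $O_KQ$ and applying the Bieri--Strebel valuation criterion with coefficient ring $R=O_K$, which is the cleaner way to justify your identification of $\Sigma_A^c(Q)$.
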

\begin{proof}
Consider 
the homomorphism of groups
$$\tau:Q\to A$$ given by $\tau(q_h)=\alpha^h/p$. This map can be extended to an epimorphism
$$\tau:O_K Q\to A,$$
where $O_K Q$ is  the group algebra of $Q$ with coefficients in $O_K.$
As $A$ is a domain, $A\cong O_K Q/P$ for some prime ideal $P\normal O_K Q$. Note also that $P\cap O_K=0$.  Thus, by \cite[2.4]{bs81} 
with $R=O_K$, the set $\Sigma_A^c(Q)$ is finite and discrete. Therefore \cite[2.2~and~2.1]{bs81} imply that$$\Sigma_A^c(Q) =\{[\upsilon\circ\tau]:\upsilon\circ\tau\neq 0\}$$
where $\upsilon:K\to\BZ_\infty$ is a discrete valuation of $K$.
Note that any real valuation  of $\BZ$ is non-negative $\upsilon(O_K)\geq 0.$
The discrete valuations of $K,$ up to multiplication with a positive real number, are the $P$-adic valuations
$\upsilon_P$ where $P\unlhd O_K$ is a prime ideal. The valuation
$\upsilon_P$ is given by
$$\upsilon_P(a)=m$$
such that
$$aO_K=P^mM$$
where $M$ is a fractionary ideal such that $P$ does not appear in its decomposition in primes. Note that for $x \in H$
$$(\alpha^x/p) O_K=(I^x)^r(pO_K)^{-1}$$
and therefore for $h,x\in H$
$$\upsilon_{I^h}(\alpha^x/p)=\Bigg\{\begin{aligned}
&r\text{ if }x=h\\
&0\text{ otherwise.}\\
\end{aligned}$$
Also, for any  prime ideal $J$ of $O_K$ with $J|pO_K$  there is some $s_0 >0$ with
$$\upsilon_{J}(\alpha^x/p)=- s_0.$$
Therefore 
$\Sigma_A^c(Q)$ has exactly $(n+1)$ points and the $\BZ Q$-module $A$ is $n$-tame but not $n+1$-tame.
By \cite{aberg} the $\FP_m$-Conjecture holds for metabelian groups of finite Pr\"ufer rank, so the group $G$ is of type $\FP_n$ but not $\FP_{n+1}$.  
\end{proof}

\noindent
Now let $H_0 \leq H$ be a subgroup of index $d$. Note that $H_0$ acts on $G$ and
$$d | H_0 | = n.$$

\noindent
Moreover $C_G(H_0)=Q_0\ltimes A_0$ with $Q_0=C_Q(H_0)$ and  $A_0=C_A(H_0)$. 
The group $Q_0$ is free abelian of rank $d$ with basis
$$\{s_x:=\prod_{t\in H_0} q_{x t}:x\in H / H_0\}.$$

\begin{proposition} The group $C_{G}(H_0)$ is of type
$\FP_d$ but not of type $\FP_{d+1}$.
\end{proposition}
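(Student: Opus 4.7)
The plan is to mimic the argument from Section~\ref{prime}: Proposition~\ref{pos2} combined with {\AA}berg's theorem \cite{aberg} (the $\FP_m$-Conjecture for metabelian groups of finite Pr\"ufer rank) will yield both directions, provided one verifies that $A_0 = C_A(H_0)$ is $d$-tame but not $(d+1)$-tame as a $\BZ Q_0$-module. For the positive direction, since $d|H_0| = n$, Proposition~\ref{pos2} already gives that $A_0$ is finitely generated and $d$-tame over $\BZ Q_0$; as $A\subseteq K$ has finite Pr\"ufer rank, so do $A_0$ and $C_G(H_0)=Q_0\ltimes A_0$, and {\AA}berg's theorem gives that $C_G(H_0)$ is of type $\FP_d$.

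The real work is the negative direction. To show that $A_0$ is not $(d+1)$-tame I would exhibit $d+1$ characters in $\Sigma^c_{A_0}(Q_0)$ summing to zero, following the valuative recipe of Proposition~\ref{fpn}. Set $L=K^{H_0}$ and, for each coset $C=xH_0\in H/H_0$, let $\beta_x := \prod_{t\in H_0}\alpha^{xt}\in O_L$. Then $\beta_x$ is $H_0$-invariant by construction, so $\beta_x/p^{|H_0|}\in A_0$, and this is precisely the image of $s_x$ under the embedding $\tau_0: Q_0\hookrightarrow A_0$ given by the $Q_0$-action on $A_0$. From $\alpha O_K = I^r$ together with the fact that the primes $I^h$ ($h\in H$) are distinct and transitively permuted by $H$, a brief Dedekind-domain calculation identifies $\beta_x O_L$ with $\mathfrak{q}_C^{\,r}$, where $\mathfrak{q}_C\lhd O_L$ is the prime lying below $\prod_{h\in C}I^h$.

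The $d+1$ characters I would take are $\tilde\chi_C(s_y):=\delta_{yH_0,C}$ (one per coset $C\in H/H_0$) together with $\tilde\chi_0(s_y):=-1$, which obviously sum to zero on every $s_y$. That $[\tilde\chi_C]\in\Sigma^c_{A_0}(Q_0)$ follows from $v_{\mathfrak{q}_C}\circ\tau_0 = r\tilde\chi_C$ together with the fact that the test element $(p^{|H_0|}/\beta_C)^k\in A_0$ has $v_{\mathfrak{q}_C}$-value $-kr \to -\infty$, against non-negativity of $v_{\mathfrak{q}_C}$ on the submonoid ring $\BZ(Q_0)_{\tilde\chi_C}$; the case $[\tilde\chi_0]\in\Sigma^c_{A_0}(Q_0)$ is analogous, using any prime of $O_L$ above $p$ and test elements $(\beta_C/p^{|H_0|})^k\in A_0$. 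I expect the main obstacle to be the careful bookkeeping of these discrete valuations and the verification that the test elements are truly $H_0$-fixed; the latter is immediate from the $H_0$-symmetric construction of $\beta_x$. The conclusion then follows by applying {\AA}berg's theorem once more: $A_0$ not being $(d+1)$-tame forces $C_G(H_0)$ not to be of type $\FP_{d+1}$.
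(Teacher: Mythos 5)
Your proposal is correct, and the positive half ($\FP_d$ via Proposition \ref{pos2} plus {\AA}berg) coincides with the paper's. For the negative half, however, you take a genuinely different route. The paper argues by contradiction at the level of $\Sigma_A^c(Q)$: if $C_G(H_0)$ were $\FP_{d+1}$ then $A_0$ would be $(d+1)$-tame, hence $\infty$-tame since $d+1>\operatorname{rk}(Q_0)$; a restriction principle (lifting each $[\chi]\in\Sigma_A^c(Q)$ to a real valuation on $A$ via Bieri--Groves and restricting it to $A_0$, with a Krull-dimension-$1$ argument to rule out $v^{-1}(\infty)\neq 0$) then forces $\Sigma_A^c(Q)$ into a closed half-sphere, contradicting the $(n+1)$-point description from Proposition \ref{fpn}. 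You instead work directly inside the fixed field $L=K^{H_0}$ and exhibit $d+1$ explicit classes in $\Sigma^c_{A_0}(Q_0)$ summing to zero, using the norms $\beta_x=\prod_{t\in H_0}\alpha^{xt}$, the factorisation $\beta_xO_L=\mathfrak q_C^{\,r}$ (which is correct: the $I^{xt}$, $t\in H_0$, are exactly the primes of $O_K$ over $\mathfrak q_C$, and $q$ is unramified of residue degree one), and the standard ``valuation unbounded below on $A_0$'' criterion for non-finite-generation over $\BZ(Q_0)_{\chi}$. Your route only uses the easy direction of the $\Sigma$-theory (producing points of the complement from discrete valuations, as in Proposition \ref{fpn} itself) and yields explicit elements of $\Sigma^c_{A_0}(Q_0)$, at the cost of some arithmetic bookkeeping in $O_L$; the paper's route avoids all computation in $A_0$ and $L$ by reusing the already-computed $\Sigma_A^c(Q)$, at the cost of invoking the heavier lifting theorem of Bieri--Groves and the $\infty$-tameness reduction. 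Both arguments are sound and reach the same conclusion.
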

\begin{proof} Note first that Lemma \ref{pos2} and the fact that the $\FP_m$-Conjecture for groups of
finite Pr\"ufer rank holds, see \cite{aberg}, imply that $C_G(H_0)$ is of type $\FP_d$. In particular $A_0$ is a finitely generated  $\BZ Q_0$-module. 

\noindent
Suppose that $C_G(H_0)$ is of type $\FP_{d+1}$. Since the $\FP_m$-Conjecture holds for groups of
finite Pr\"ufer rank, we deduce that $A_0$ is $(d+1)$-tame as a $\BZ Q_0$-module: every $d+1$ elements of $\Sigma_{A_0}^c(Q_0)$ lie in an open hemisphere. Since $d+1 > rk(Q_0)$ we get that $A_0$ is $\infty$-tame as a $\BZ Q_0$-module.

\noindent
Let $\chi : Q \to \BR$ be a non-zero homomorphism with $\chi(Q_0) \not= 0$. We claim:
\begin{equation} \label{sexta} \hbox{ If }[\chi] \in \Sigma_A^c(Q)\hbox{ then }[\chi \mid_{Q_0}] \in \Sigma^c_{A_0}(Q_0). \end{equation}
Indeed, $A$ is a cyclic $O_K Q$-module. So $A \simeq O_K Q / I$ for $I = ann_{O_K Q}(A)$. Since $[\chi] \in \Sigma_A^c(Q)$ and $O_K$ is finitely generated as a $\BZ$-module  one sees that $A$ is not finitely generated as a $O_K Q_{\chi}$-module. Then by applying  \cite[Thm.~8.1]{BieriGroves1}, $\chi$ lifts to a real valuation $v : A \to \BR \cup {\infty}$ such that $v(O_K) \geq 0$. 
The
restriction $$w = (v \mid_{A_0}) : A_0 \to \BR \cup {\infty}$$ has the following property:
$w( a q) = w(a) + \chi(q)$ for every $a \in A_0, q \in Q_0$. This shows that, provided $w(A_0) \not= \infty$, $A_0$ is not finitely generated as $O_K Q_{\chi}$-module and consequently  $[\chi \mid_{Q_0}] \in \Sigma^c_{A_0}(Q_0)$. Note that $v^{-1}(\infty)$ is a prime ideal in $A$ and that $A$ has Krull dimension 1. Thus $v^{-1}(\infty) = 0$.

\noindent
Finally, since $A_0$ is an $\infty$-tame  $\BZ Q_0$-module, (\ref{sexta}) implies  that $\Sigma_A^c(Q)$ lies in a closed half subsphere of $S(Q)$. This contradicts the description of $\Sigma_A^c(Q)$ in the proof of  
 Lemma \ref{fpn}.
\end{proof}

\section{Soluble groups of finite Pr\"ufer rank} \label{Prufer}

\noindent In this section we shall consider soluble groups of finite Pr\"ufer rank  of type $\FP_n$ and  cohomological finiteness conditions for centralisers of finite subgroups.
Soluble groups $G$ of finite Pr\"ufer rank are nilpotent-by-abelian-by-finite, which follows from the proof of \cite[Theorem 10.38]{rob2}. Then $G$ has a subgroup of finite index $G_1$ that is characteristic and is nilpotent-by-abelian.
For such  a  group $G$ there is a group extension:

\begin{equation}\label{one}
N \mono G \epi Q   
\end{equation}  
where $N$ is the commutator of  $G_1$, $N$ is nilpotent and $Q$ is abelian-by-finite. 

\medskip\noindent  For soluble groups of finite Pr\"ufer rank there is as of now no result like Theorem \ref{metabelianbredonfpn}. 
We have, however, some partial results which suggest a general conjecture for Bredon type $\FP_m$:

\begin{conjecture}\label{Bredonfpdconj} Let $G$ be a soluble group of finite Pr\"ufer rank and of type $\FP_n$. Then $G$ is of type Bredon-$\FP_m$ for $m= {\lfloor \frac{n}{sc}\rfloor}$, where $s$ is the bound on the orders of the finite subgroups of $G$ and $c$ is the nilpotency class of $N$ in (\ref{one}) above.
\end{conjecture}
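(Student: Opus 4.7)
The plan is to imitate the metabelian case (Theorem \ref{metabelianbredonfpn}), with the abelian kernel replaced by the nilpotent kernel $N$ of class $c$. By Theorem \ref{bredonfpn} together with Proposition \ref{conjclasses}, it suffices to prove that for every finite subgroup $H\leq G$ the centraliser $C_G(H)$ is of type $\FP_m$, where $m=\lfloor n/(sc)\rfloor$.

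First I would reduce to the nilpotent-by-abelian case. The characteristic subgroup $G_1$ of finite index in $G$ from (\ref{one}) gives $[C_G(H):C_{G_1}(H)]<\infty$, so the two share the same $\FP_k$-type. After a further finite-index reduction we may assume $G=N\rtimes Q$ with $N$ nilpotent of class $c$, $Q$ finitely generated abelian, and the action of $H$ compatible with this splitting, so that in particular $C_G(H)=C_N(H)\rtimes C_Q(H)$.

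The technical heart requires a strengthening of the $\FP_m$-Conjecture beyond the metabelian case handled by \AA berg \cite{aberg}: for $G=N\rtimes Q$ of finite Pr\"ufer rank with $N$ nilpotent of class $c$, the property $G$ of type $\FP_n$ should force each lower central factor $M_i=\gamma_iN/\gamma_{i+1}N$, viewed as a $\BZ Q$-module, to be $\lfloor n/c\rfloor$-tame. The factor $1/c$ is expected via tensor-power arguments generalising \cite{BieriGroves2} and \cite[Lemma~1]{Brookes}: since $M_i$ is a quotient of $\otimes^i M_1$, Bieri--Groves type tameness feeds up the lower central series with a loss of a factor $c$. Granted this, Lemma \ref{pos} applied to each $M_i$ with the $H$-action gives that $C_{M_i}(H)$ is $\lfloor n/(c|H|)\rfloor$-tame over $\BZ C_Q(H)$; using $|H|\leq s$ this is at least $m$-tame.

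Finally one reassembles. The filtration of $C_N(H)$ by the subgroups $C_N(H)\cap\gamma_iN$ has graded quotients injecting into the $C_{M_i}(H)$; since tameness passes to submodules \cite[Lemma~1.1]{BieriStrebel}, all these quotients are $m$-tame as $\BZ C_Q(H)$-modules. Applying the converse direction of the conjectured generalised $\FP_m$-Conjecture to $C_G(H)=C_N(H)\rtimes C_Q(H)$, one concludes that $C_G(H)$ is of type $\FP_m$, and Theorem \ref{bredonfpn} finishes the proof. The principal obstacle is precisely the generalised $\FP_m$-Conjecture for nilpotent-by-abelian groups of finite Pr\"ufer rank, which is not yet established; a secondary subtlety is that the filtration $\{C_N(H)\cap\gamma_iN\}$ need not coincide with the lower central series of $C_N(H)$, so the tensor-power bookkeeping must be tracked through a coarser filtration. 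Showing that the factor $c$ in the bound is actually sharp, rather than merely an artifact of this bookkeeping, is a further delicate point that the examples of sections \ref{prime} and \ref{torsionfree} (which exhibit sharpness only in the case $c=1$) leave open.
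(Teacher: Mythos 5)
The statement you are trying to prove is stated in the paper as a \emph{conjecture}: the paper offers no proof of it, only a reduction together with the cases $m=1$ and $m=2$ (Corollaries \ref{centralfg} and \ref{centralfp}), so there is no complete argument to measure yours against. Your outline does follow the paper's intended route: reduce via Proposition \ref{conjclasses} and Theorem \ref{bredonfpn} to showing centralisers of finite subgroups are $\FP_m$, pass to the nilpotent-by-abelian finite-index subgroup, and control the lower central factors of $C_N(H)$ by tameness. One correction on your accounting of what is open: the ``forward'' step you label as a needed strengthening of the $\FP_m$-Conjecture --- that $G$ of type $\FP_n$ forces the lower central factors to be $\lfloor n/c\rfloor$-tame --- is not conjectural. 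It is proved in the paper (Theorem \ref{centralser} and Corollary \ref{cortrivialsub}) by combining {\AA}berg's result \cite[IV.2.2]{aberg}, which gives $n$-tameness of $N/N'$ directly from $\FP_n$ without any appeal to the $\FP_m$-Conjecture, with the convexity estimate $\Sigma^c_{\otimes^i A}(Q)\subseteq \mathrm{conv}_{\leq i}\Sigma^c_A(Q)$ from \cite{bs81}; the paper also handles your ``secondary subtlety'' about the filtration $C_N(H)\cap\gamma_iN$ versus the lower central series of $C_N(H)$ by working with arbitrary $Q_0$-invariant abelian subsections of $N$.

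The genuine and fatal gap is exactly where you place it, in the final step: the ``converse direction of the conjectured generalised $\FP_m$-Conjecture,'' i.e.\ that finite generation and $m$-tameness of $\oplus_i\gamma_i(C_N(H))/\gamma_{i+1}(C_N(H))$ as a $\BZ[C_G(H)/C_N(H)]$-module implies that $C_G(H)$ is of type $\FP_m$. This is precisely the paper's Conjecture \ref{centralserconj}, which is open beyond $m=2$ (the case $m=2$ uses Groves' finite presentability criterion, Proposition \ref{grovesfp2}, and the case $h(G/N)\leq m-1$ uses Meinert \cite{meinert}). No analogue of {\AA}berg's theorem is known in this direction for nilpotent-by-abelian groups of finite Pr\"ufer rank, so your argument reduces one open conjecture to another rather than proving the statement; that is the same position the paper is in, and it is why the statement appears there as a conjecture.
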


\noindent
Note that by Proposition \ref{conjclasses}, $G$ has finitely many conjugacy classes of finite subgroups. Therefore  Theorem \ref{bredonfpn}  implies that $G$ is of type Bredon-$\FP_m$ if and only if  the centralizer in $G$ of any finite group of automorphisms of $G$ is of type $\FP_m$.
The $\FP_m$ property is invariant under finite group extensions and finite index subgroups.  In particular, let  $M_1 \leq M_2$ be groups such that  $M_1$ has finite index in $M_2$.  Then $M_1$ is of type $\FP_m$ if and only  if $M_2$ is of type $\FP_m$. It therefore suffices to show that the centralizer in $G_1$ of any finite group $H$ of automorphisms of $G$ is of type $\FP_m$. Thus without loss of generality we can assume that $G = G_1$.  Hence from now on let  $Q$ be abelian, $G $ be nilpotent-by-abelian and let $H$ be a finite group of automorphisms of $G$. Note that we may also  assume that $m \geq 1$ in the above conjecture. Otherwise there is nothing to prove. Thus we can suppose that
\begin{equation} \label{ncs}
n \geq cs.
\end{equation}

\noindent
We denote by $A=N/N'$ the abelianization of $N$. We shall make use of the following result by  {\AA}berg:

\begin{proposition}\cite[IV.2.2]{aberg}  Let $G$ be a soluble group of finite Pr\"ufer rank   of type $\FP_n.$ Then $A$ is an $n$-tame   $\BZ Q$-module. 
\end{proposition}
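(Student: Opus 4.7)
The plan is to induct on the nilpotency class $c$ of $N$. The base case $c = 1$ is the case where $N$ itself is abelian, so that $G$ is metabelian of finite Pr\"ufer rank; in this case, the conclusion that $A = N$ is $n$-tame as a $\BZ Q$-module follows directly from the $\FP_m$-Conjecture for metabelian groups of finite Pr\"ufer rank (resolved in \cite{aberg}), already invoked in the proof of Corollary \ref{fpncentralisers}. So I need only handle the inductive step.

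For the inductive step, I would peel off the last nontrivial term $Z = \gamma_c(N)$ of the lower central series of $N$. This $Z$ is an abelian normal subgroup of $G$ of finite Pr\"ufer rank, and $Z \subseteq N'$ since $c \geq 2$. The quotient $G/Z$ is soluble of finite Pr\"ufer rank; its nilpotent radical $N/Z$ has class $c-1$; and its corresponding abelian section $(N/Z)/(N/Z)'$ equals $N/N' = A$ with the same induced $Q$-action, precisely because $Z \subseteq N'$. Consequently, if $G/Z$ can be shown to be of type $\FP_n$, the inductive hypothesis applied to $G/Z$ delivers the $n$-tameness of $A$ over $\BZ Q$ and completes the induction.

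The technical heart of the argument is thus the implication: $\FP_n$ for $G$ forces $\FP_n$ for $G/Z$ when $Z$ is an abelian normal subgroup of $G$ of finite Pr\"ufer rank. This is where I expect the main obstacle, because $Z$ need not be of type $\FP_\infty$ (it is typically only minimax abelian), so the classical ``$\FP_\infty$ kernel plus $\FP_n$ total $\Rightarrow$ $\FP_n$ quotient'' argument is unavailable. The natural tool is the Lyndon--Hochschild--Serre spectral sequence for $1 \to Z \to G \to G/Z \to 1$: the finite Pr\"ufer rank of $Z$ guarantees that every $H_q(Z;\BZ)$ is a finitely generated abelian group, and one must propagate this through the $E^2$-page to conclude finite generation of $H_p(G/Z;\BZ[G/Z])$ for $p \leq n$. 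Controlling the $\BZ[G/Z]$-module structure of the homology of $Z$ (using the finite Pr\"ufer rank hypothesis, so that e.g.\ exterior powers of $Z$ remain minimax) is the delicate step; once this propagation is made rigorous, the induction runs smoothly and the proposition follows.
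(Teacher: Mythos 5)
This proposition is not proved in the paper at all: it is imported verbatim from {\AA}berg \cite[IV.2.2]{aberg}, whose argument is a direct one carried out with his valuation-theoretic machinery for soluble groups of finite Pr\"ufer rank, not an induction on the nilpotency class of $N$. So your proposal has to stand on its own, and it does not: the step you yourself flag as ``the technical heart'' --- that $\FP_n$ for $G$ forces $\FP_n$ for $G/Z$ with $Z=\gamma_c(N)$ --- is precisely where all the difficulty of the proposition lives, and nothing you say closes it. Quotients of $\FP_n$ groups are not $\FP_n$ in general; the standard descent result requires the kernel to be of type $\FP_\infty$, and an abelian normal subgroup of finite Pr\"ufer rank in this setting is typically only minimax (e.g.\ $\BZ[1/p]$), hence not even finitely generated, so not of type $\FP_1$. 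Your fallback assertion that finite Pr\"ufer rank of $Z$ ``guarantees that every $H_q(Z;\BZ)$ is a finitely generated abelian group'' is false: for $Z=\BZ[1/p]$ one has $H_1(Z;\BZ)=Z=\BZ[1/p]$, which is minimax but not finitely generated. What one would actually need is finite generation (and tameness) of $H_q(Z;\BZ)$ as a module over $\BZ[G/Z]$, and statements of exactly that kind are what the proposition (and Section 5 of this paper, e.g.\ Theorem \ref{centralser} and the final Proposition) are designed to \emph{deliver}, not assume; pushing them into the inductive step is circular in effect.

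The base case is fine --- for $c=1$ the implication $\FP_n\Rightarrow n$-tame is the Bieri--Groves direction of the $\FP_m$-Conjecture, valid for all finitely generated metabelian groups \cite{BieriGroves} --- and the bookkeeping that $A=N/N'$ is unchanged upon passing to $G/\gamma_c(N)$ (since $\gamma_c(N)\leq N'$ for $c\geq 2$) is correct. But as written the proposal reduces the proposition to an unproved descent statement that is at least as hard as the proposition itself, so it does not constitute a proof.
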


\begin{lemma}\label{reduction} Let $G$ be a nilpotent-by-abelian group of finite Pr\"ufer rank and of type $\FP_n$. Let $H$ be a finite group acting on $G$. Let $N$ be as in (\ref{one}) above. Then 
$A=N/N'$ is finitely generated and $\lfloor \frac{n}{|H|} \rfloor$-tame as a $C_G(H)/C_N(H)$-module.

\end{lemma}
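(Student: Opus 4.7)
Plan. The strategy is to mimic Lemma~\ref{pos}, with {\AA}berg's theorem (stated just before the lemma) providing the input that $A=N/N'$ is $n$-tame as a $\BZ Q$-module. Setting $Q_0 = C_Q(H)$ and $d = \lfloor n/|H|\rfloor$ (where we may assume $d \geq 1$ by \eqref{ncs}), we first note that $N$ acts trivially on $A$, so $C_G(H)/C_N(H)$ embeds into $Q_0$; the nilpotent-by-abelian analogue of \cite[Lemma~3.10]{martineznucinkis} (which should go through by the same argument, using finite Pr\"ufer rank) shows that this embedding has finite-index image. It is therefore enough to prove that $A$ is finitely generated and $d$-tame as a $\BZ Q_0$-module. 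Using \cite[Lemma~3.5]{martineznucinkis}, fix a finite-index subgroup $\tilde Q = Q_0 \times Q_1 \leq Q$ on which $\sum_{t \in H} t$ annihilates $Q_1$.

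The argument for finite generation of $A$ over $\BZ Q_0$ will be identical to the first half of the proof of Lemma~\ref{pos}: if $A$ were not finitely generated, then \cite[Cor.~4.5]{bs81} would produce a nonzero $\chi: \tilde Q \to \BR$ with $\chi(Q_0)=0$ and $[\chi] \in \Sigma_A^c(\tilde Q)$; the $H$-translates $\chi^t$ then provide $|H|$ elements of $\Sigma_A^c(\tilde Q)$ summing to zero (using $H$-invariance of $\Sigma_A^c(\tilde Q)$, and \eqref{annihilate} on $Q_1$), contradicting $n$-tameness since $|H| \leq d|H| \leq n$.

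For $d$-tameness, suppose there exist $[\chi_1],\ldots,[\chi_d] \in \Sigma_A^c(Q_0)$ with $\sum_i \chi_i = 0$. The crucial step is to lift each $\chi_i$ to an extension $\tilde\chi_i = \chi_i + \mu_i$ on $\tilde Q$ (with $\mu_i: Q_1 \to \BR$) still lying in $\Sigma_A^c(\tilde Q)$; then the averaging computation
$$
\sum_{i=1}^{d} \sum_{t \in H} \tilde\chi_i^{\,t} \;=\; |H| \sum_{i=1}^{d} \chi_i \;+\; \sum_{i,t} \mu_i^{\,t} \;=\; 0
$$
(using $\chi_i^t = \chi_i$ since $H$ fixes $Q_0$ pointwise, together with \eqref{annihilate}) delivers $d|H|\leq n$ characters of $\Sigma_A^c(\tilde Q)$ summing to zero, contradicting $n$-tameness. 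This character-lifting step is the principal obstacle: it is the analogue, for $A$ itself, of \cite[Lemma~3.3]{martineznucinkis}, which was used in Lemma~\ref{pos} only for the fixed-point submodule $C_A(H)$. I expect the same valuation-theoretic argument---extending a real valuation of $A$ compatible with $\chi$ on $Q_0$ to one on all of $\tilde Q$, exploiting that $A$ is a finitely generated $\BZ Q$-module of finite Krull dimension---to carry through in the present setting.
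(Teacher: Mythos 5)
Your proposal follows essentially the same route as the paper: the paper likewise feeds {\AA}berg's Proposition IV.2.2 ($n$-tameness of $A$ over $Q$) into ``a similar argument to Lemma~\ref{pos}'' to get finite generation and $\lfloor n/|H|\rfloor$-tameness over $C_Q(H)$, and then cites \cite[Proposition 3.11]{martineznucinkis} for the finiteness of the index of the image of $C_G(H)/C_N(H)$ in $C_Q(H)$, exactly the reduction you describe. The character-lifting step you flag as the principal obstacle is in fact already available: \cite[Lemma 3.3]{martineznucinkis} is applied to the module $A$ itself (lifting into $\Sigma_A^c(\widetilde Q)$) in the proof of Lemma~\ref{pos}, so it transfers verbatim to this setting.
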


\begin{proof} As $G$ is of type $\FP_n$ and of finite Pr\"ufer rank,
$A$ is $n$-tame as a $Q$-module by \cite[Proposition IV 2.2]{aberg} . Using a
similar argument to Lemma \ref{pos}, we deduce that $A$ is finitely generated and $\lfloor \frac{n}{|H|} \rfloor$-tame  as a
$C/N$-module, where $C/N=C_Q(H)$. Moreover, the index $|C:C_G(H)N|$ is
finite \cite[Proposition 3.11]{martineznucinkis} (note that although the groups considered in \cite{martineznucinkis} are of type $\FP_\infty$, in this result the group only has to be finitely generated and of finite Pr\"ufer rank).
\end{proof}

\noindent
For a group $M$ denote by $\gamma_i(M)$ the $i$-th term of the lower central series of $M$.

\begin{theorem}\label{centralser}
Let $G$ be a nilpotent-by-abelian group of finite Pr\"ufer rank, of type $\FP_n$ and let $H$ be a finite group of order $s$ acting on $G$ such that (\ref{ncs}) holds. Let $N$ be as in (\ref{one}) above. Then $D = \oplus_i \gamma_i(C_N(H)) / \gamma_{i+1} (C_N(H))$ is finitely generated and  $\lfloor\frac{n}{sc}\rfloor$-tame as a $C_G(H)/C_N(H)$-module, where $c$ is the nilpotency class of $N$.
\end{theorem}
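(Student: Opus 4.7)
The plan is to reduce to tameness of the first layer $D_1 = M/M'$ (where $M := C_N(H)$) and then propagate up the lower central series of $M$ via a tensor-power argument. Set $R := C_G(H)/M$. By Lemma~\ref{reduction}, $A = N/N'$ is already known to be finitely generated and $\lfloor n/s \rfloor$-tame as a $\BZ R$-module.

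For the reduction step, recall that $D = \bigoplus_{i=1}^c \gamma_i(M)/\gamma_{i+1}(M)$ is generated in degree one as a graded Lie algebra: iterated commutator brackets furnish $R$-equivariant surjections $D_1^{\otimes i} \twoheadrightarrow D_i$ (with diagonal $R$-action on the source) for every $i \le c$. A tensor-power tameness result of the kind used in \cite{BieriGroves2} and \cite{aberg} then gives that, if $D_1$ is $m$-tame, then $D_1^{\otimes i}$, and hence its quotient $D_i$, is $\lfloor m/i \rfloor$-tame. Applying this with $m = \lfloor n/s \rfloor$ and $i \le c$ forces each $D_i$ to be $\lfloor n/(sc)\rfloor$-tame. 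Since $\Sigma^c$ of a finite direct sum equals the union of the individual $\Sigma^c$'s, the whole $D$ inherits $\lfloor n/(sc)\rfloor$-tameness; condition~(\ref{ncs}) guarantees this bound is at least one, so the statement is non-vacuous.

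It remains to prove $D_1 = M/M'$ is $\lfloor n/s\rfloor$-tame over $\BZ R$. The natural $R$-equivariant homomorphism $\phi\colon M/M' \to A$ induced by $M \hookrightarrow N \twoheadrightarrow A$ has image $MN'/N'$, which is an $R$-submodule of $A$ and hence inherits $\lfloor n/s\rfloor$-tameness from Lemma~\ref{reduction} by \cite[Lemma~1.1]{BieriStrebel}. The kernel $(M \cap N')/M'$ calls for a more delicate argument: filtering $M$ by $M_i := M \cap \gamma_i(N)$, each successive quotient $M_i/M_{i+1}$ embeds into the $H$-invariants of the layer $\gamma_i(N)/\gamma_{i+1}(N)$, which, by combining the tensor-power argument for $A$ with Lemma~\ref{pos} applied layerwise, is $\lfloor n/(si)\rfloor$-tame; pasting through the resulting extensions delivers the claimed $\lfloor n/s\rfloor$-tameness of $D_1$.

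The main obstacle is this last step: handling the kernel of $\phi$ while retaining the tameness bound, which relies both on the precise statement of tensor-power tameness invoked above and on the careful interplay between the filtration $\{M_i\}$ and the derived subgroup $M'$. A cleaner variant would work throughout with $\bigoplus_i \gamma_i(N)/\gamma_{i+1}(N)$: establish its tameness via tensor powers, pass to $H$-invariants via Lemma~\ref{pos}, and then identify $D$ with a controlled subquotient; the difficulty is merely transferred to this identification, since the natural map $D \to \bigoplus_i (\gamma_i(N)/\gamma_{i+1}(N))^H$ is generally neither injective nor surjective.
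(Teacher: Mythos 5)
Your overall strategy---first control $D_1=M/M'$ for $M=C_N(H)$, then push tameness up the lower central series of $M$ via the surjections $D_1^{\otimes i}\epi D_i$---loses a factor of $c$ twice and therefore cannot reach the stated bound. The first loss already sinks your intermediate claim: your filtration $M_i=M\cap\gamma_i(N)$ has factors that are subsections of $\otimes^i A$, so all you can conclude is $\Sigma^c_{M_i/M_{i+1}}(Q_0)\subseteq \text{conv}_{\leq i}\Sigma^c_A(Q_0)$, where $Q_0=C_G(H)/C_N(H)$. Since $\Sigma^c$ of a filtered module is the \emph{union} of the $\Sigma^c$ of its factors (\cite[Lemma 1.1]{bs81}), a module assembled from pieces that are only $\lfloor n/(si)\rfloor$-tame is only as tame as its worst piece, i.e.\ $\lfloor n/(sc)\rfloor$-tame; ``pasting through the resulting extensions'' cannot upgrade this to the $\lfloor n/s\rfloor$-tameness of $D_1$ that you assert, because the kernel $(M\cap N')/M'$ genuinely contributes points of $\text{conv}_{\leq c}\Sigma^c_A(Q_0)$ lying outside $\Sigma^c_A(Q_0)$. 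The second loss then compounds the first: with $D_1$ only $\lfloor n/(sc)\rfloor$-tame, the containment $\Sigma^c_{D_1^{\otimes i}}(Q_0)\subseteq\text{conv}_{\leq i}\Sigma^c_{D_1}(Q_0)$ yields for $D_c$ only $\lfloor n/(sc^2)\rfloor$-tameness, strictly weaker than the theorem whenever $c\geq 2$.

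The paper avoids both losses by never forming tensor powers of $D_1$ at all. The key observation is that \emph{each} layer $\gamma_i(M)/\gamma_{i+1}(M)$ is itself an abelian, $Q_0$-invariant subsection of $N$, and that for \emph{any} such subsection $B$ one has $\Sigma^c_B(Q_0)\subseteq\text{conv}_{\leq c}\Sigma^c_A(Q_0)$: filter $B$ by intersecting with the lower central series of $N$ (your filtration, but applied uniformly to an arbitrary abelian subsection rather than only to $M/M'$), so that each factor is a subsection of $\otimes^i A$, is finitely generated since $\otimes^i A$ is for $i\leq c\leq\lfloor n/s\rfloor$, and has $\Sigma^c$ inside $\text{conv}_{\leq i}\Sigma^c_A(Q_0)\subseteq\text{conv}_{\leq c}\Sigma^c_A(Q_0)$. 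Taking the union over the layers of $M$ gives $\Sigma^c_D(Q_0)\subseteq\text{conv}_{\leq c}\Sigma^c_A(Q_0)$ with $A$ being $\lfloor n/s\rfloor$-tame by Lemma \ref{reduction}, whence $D$ is $\lfloor n/(sc)\rfloor$-tame: the factor $c$ is lost exactly once. Your closing remark correctly senses that comparing $D$ with $\oplus_i(\gamma_i(N)/\gamma_{i+1}(N))^H$ is not the right vehicle; the missing idea is that no such comparison (and no identification of $D$ with anything) is needed---only the fact that each layer of $M$ is \emph{some} $Q_0$-invariant abelian subsection of $N$ is used.
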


\begin{proof} Let $Q_0=C_G(H)/C_N(H)$ and let $d=\lfloor\frac{n}{s}\rfloor$. By Lemma \ref{reduction} $A=N/N'$ is $d$-tame as a $Q_0$-module. By (\ref{ncs}) $d \geq c$ and hence for $i \leq c$ we have that
$\gamma_i(N) / \gamma_{i+1} (N)$ is finitely generated as a $\BZ Q_0$-module, as it is a quotient of the finitely generated $\BZ Q_0$-module $\otimes^i A$. 

\noindent
Let $B$ be any abelian subsection of $N$ that is invariant under the action of $Q_0$ and let $A_i$ be the $i$-th factor of the
lower central series of $N$. Then $B$ has a series of $\BZ Q_0$-modules with factors $B_1,...,B_c$ such that each $B_i$ is a subsection of $A_i$ and $A_i$ is a subsection of $\otimes^i A$. Then all $A_i$ and $B_i$ are finitely generated as $\BZ Q_0$-modules and so $B$ is finitely generated as a $\BZ Q_0$-module.
By \cite[Lemma 1.1]{bs81}
$$\Sigma^c_B(Q_0) = \bigcup_{i=1}^c \Sigma^c_{B_i}(Q_0),$$
and for $i \leq c$
$$\Sigma_{B_i}^c(Q_0) \subseteq \Sigma_{\otimes^i A}^c(Q_0) \subseteq conv_{\leq i}\Sigma_A^c(Q_0) \subseteq conv_{\leq c}\Sigma_A^c(Q_0).$$
Therefore 
$$
\Sigma^c_B(Q_0) \subseteq conv_{\leq c}\Sigma_A^c(Q_0).$$
Now apply this for $B = \gamma_i(C_N(H)) / \gamma_{i+1} (C_N(H))$. Then $\gamma_i(C_N(H)) / \gamma_{i+1} (C_N(H))$ is finitely generated as a $\BZ Q_0$-module and using \cite[Lemma 1.1]{bs81} again 
$$
\Sigma^c_D(Q_0)  = \cup_i \Sigma_{\gamma_i(C_N(H)) / \gamma_{i+1} (C_N(H))}^c(Q_0) \subseteq conv_{\leq c}\Sigma_A^c(Q_0).$$
Therefore
if $0\notin conv_{\leq d}\Sigma_A^c(Q_0)$ then also $0\notin conv_{\leq \frac{d}{c}}\Sigma_{D}^c(Q_0)$ and $D$ is $\lfloor \frac{d}{c}\rfloor$-tame as a $\BZ Q_0$-module. 
\end{proof}

\begin{corollary}\label{cortrivialsub}  Let $G$ be a finitely generated nilpotent-by-abelian group of finite Pr\"ufer rank.  Let $N$ be nilpotent of class $c$ as in (\ref{one}) and assume that $G$ has type $\FP_n$ for some $n \geq c$. Then $\oplus_i \gamma_i(N) / \gamma_{i+1}(N)$ is finitely generated and $\lfloor \frac{n}{c} \rfloor$-tame as a $\BZ Q$-module.
\end{corollary}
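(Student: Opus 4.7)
The plan is to deduce this corollary as a direct specialisation of Theorem \ref{centralser} to the case where the acting group is trivial. Taking $H = \{1\}$, we have $s = |H| = 1$, so condition (\ref{ncs}), which reads $n \geq cs$, becomes exactly the hypothesis $n \geq c$. Moreover $C_N(H) = N$, $C_G(H) = G$, and thus $C_G(H)/C_N(H) = G/N = Q$. The module $D$ in the statement of Theorem \ref{centralser} therefore collapses to $\oplus_i \gamma_i(N)/\gamma_{i+1}(N)$, viewed as a $\BZ Q$-module, and the tameness bound $\lfloor n/(sc) \rfloor$ becomes $\lfloor n/c \rfloor$.

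Thus the proof amounts to writing: apply Theorem \ref{centralser} with $H$ the trivial group acting trivially on $G$; the conclusion is exactly the statement of the corollary. There is no real obstacle here since Theorem \ref{centralser} was stated for an arbitrary finite group $H$ acting on $G$ and the trivial group is permitted. One might add a brief sanity check that Lemma \ref{reduction} (invoked inside the proof of Theorem \ref{centralser}) is valid for $H = 1$: in that case $C_G(H) = G$, $C_N(H) = N$, and $C_Q(H) = Q$, and the conclusion reduces to \AA{}berg's statement that $A = N/N'$ is finitely generated and $n$-tame as a $\BZ Q$-module, which is precisely \cite[Proposition IV.2.2]{aberg}. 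Hence nothing in the chain of reductions breaks down, and the corollary follows at once.
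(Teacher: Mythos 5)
Your proposal is correct and coincides with the paper's own proof, which is literally ``Take $H=\{1\}$ in Theorem \ref{centralser}.'' The additional sanity check that Lemma \ref{reduction} degenerates to {\AA}berg's result when $H=1$ is a reasonable extra remark but not needed.
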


\begin{proof} Take $H=\{1\}$ in Theorem \ref{centralser}.
\end{proof}

\noindent
The following result together with the remarks after Conjecture \ref{Bredonfpdconj} show that Conjecture \ref{Bredonfpdconj} holds for $m = 1$.

\begin{corollary}\label{centralfg}
Let $G$ be a finitely generated nilpotent-by-abelian group of finite Pr\"ufer rank.  Let $N$ be nilpotent of class $c$ as in (\ref{one}) with $Q$ abelian. Let $H$ be a finite group of order $s$ acting on $G$.  Assume also that $G$ is  of type $\FP_n$ such that $\frac{n}{cs} \geq 1$.   Then $C_G(H)$ is finitely generated.
\end{corollary}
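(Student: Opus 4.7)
The plan is to apply Theorem~\ref{centralser} with $m=\lfloor n/(sc)\rfloor$ and combine it with a standard descent argument for nilpotent groups. Since $n/(sc)\ge 1$, we have $m\ge 1$, so Theorem~\ref{centralser} yields that $D=\oplus_i \gamma_i(C_N(H))/\gamma_{i+1}(C_N(H))$ is finitely generated as a $\BZ Q_0$-module, where $Q_0:=C_G(H)/C_N(H)$. In particular, with $M:=C_N(H)$, each successive quotient $\gamma_i(M)/\gamma_{i+1}(M)$ is finitely generated as a $\BZ Q_0$-module.

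First I would observe that $Q_0$ is itself finitely generated. Since $M=C_G(H)\cap N$, there is a natural injection $Q_0\hookrightarrow C_G(H)N/N$, and the proof of Lemma~\ref{reduction} (via \cite[Proposition~3.11]{martineznucinkis}) shows that $C_G(H)N/N$ has finite index in $C_Q(H)\le Q$. As $Q$ is finitely generated abelian, so is $Q_0$. Next, let $x_1,\dots,x_r\in C_G(H)$ be lifts of generators of $Q_0$, and for each $1\le i\le c$ let $a_{i,1},\dots,a_{i,k_i}\in\gamma_i(M)$ be lifts of a finite $\BZ Q_0$-generating set of $\gamma_i(M)/\gamma_{i+1}(M)$. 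I claim that $C_G(H)$ is generated by the finite set $\{x_1,\dots,x_r\}\cup\{a_{i,l}\}$.

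Let $H_0$ be the subgroup generated by this set. Since the $x_j$ map onto generators of $Q_0$, we have $H_0M=C_G(H)$; and because $M$ is normal in $C_G(H)$ (as an intersection of normal subgroups), $M_0:=H_0\cap M$ is normal in $H_0$. Using $[M,\gamma_i(M)]\subseteq\gamma_{i+1}(M)$, the conjugation action of $C_G(H)$ on $\gamma_i(M)/\gamma_{i+1}(M)$ factors through $Q_0$, so the $H_0$-conjugates of the $a_{i,l}$ already realise every $\BZ Q_0$-translate of their classes and hence surject onto $\gamma_i(M)/\gamma_{i+1}(M)$. This yields $M_0\,\gamma_{i+1}(M)=\gamma_i(M)$ for every $i$, and a downward induction starting from $\gamma_{c+1}(M)=1$ forces $M_0=M$, whence $H_0=C_G(H)$. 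The main subtlety is verifying that the $\BZ Q_0$-module structure on $\gamma_i(M)/\gamma_{i+1}(M)$ invoked in Theorem~\ref{centralser} coincides with the one induced by conjugation inside $C_G(H)$; once this compatibility is in place, the nilpotent descent above is routine.
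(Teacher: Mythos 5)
Your proposal is correct and follows essentially the same route as the paper: invoke Theorem~\ref{centralser} to get finite generation of $D$ over $\BZ Q_0$, then use nilpotency of $N$ (via a descent along the lower central series of $C_N(H)$) to conclude that $C_G(H)$ is finitely generated. The paper states this descent in one line (reducing to finite generation of the abelianisation of $C_G(H)\cap N$ as a $Q_0$-module), whereas you spell it out explicitly; the content is the same.
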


\begin{proof} 
Since $N$ is a nilpotent group it suffices to show that 
$$B=(C_G(H)\cap N)/(C_G(H)\cap N)'$$ is finitely generated as a $Q_0$-module, where $Q_0 = C_G(H) / C_N(H)$. 
This follows directly from the fact that $D$ from Theorem \ref{centralser} is finitely generated as a $\BZ Q_0$-module.

\end{proof}

\noindent 
In contrast to the metabelian case
 there is no criterion for finite presentability of nilpotent-by-abelian groups.  There are, however,  results giving sufficient conditions implying  finite presentability and hence type $\FP_2$. The following results are some of these sufficient conditions.

\begin{proposition}\label{grovesfp} \cite[Cor.~C]{groves}
Let $G$ be a finitely generated nilpotent-by-abelian group with normal nilpotent subgroup $N$ of nilpotency length $c$ with $G/ N$ abelian.  Suppose 
$N/N'$ is $(c+1)$-tame as a $G/N$-module. Then $G$ is finitely presented.
\end{proposition}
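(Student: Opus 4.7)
The plan is to proceed by induction on the nilpotency class $c$ of $N$. For the base case $c = 1$, the group $N$ is abelian, so $G$ is metabelian and $N = N/N'$ is $2$-tame as a $\BZ Q$-module; this is exactly the Bieri--Strebel criterion \cite{BieriStrebel} for finite presentability of metabelian groups, so $G$ is finitely presented.

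For the inductive step, set $K = \gamma_c(N)$ and $\bar G = G/K$. Then $\bar G$ is nilpotent-by-abelian with normal nilpotent subgroup $\bar N = N/K$ of class $c-1$, and the abelianization $\bar N/\bar N' = N/N' = A$ is still $(c+1)$-tame, hence a fortiori $c$-tame, as a $\BZ Q$-module. By the inductive hypothesis, $\bar G$ is finitely presented. This leaves a central extension $K \mono G \epi \bar G$ in which $K$ is an abelian $\BZ Q$-module arising as a quotient of $\otimes^c A$, hence finitely generated over $\BZ Q$. By the Bieri--Groves tensor inequality one also has $\Sigma_K^c(Q) \subseteq conv_{\leq c}\Sigma_A^c(Q)$, so the $(c+1)$-tameness of $A$ translates into a quantitative tameness condition on $K$.

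To produce a finite presentation of $G$, I would choose a finite presentation $\langle X \mid R \rangle$ of $\bar G$, lift $X$ to a generating set $\tilde X$ of $G$, and for each $r \in R$ let $\kappa(r) \in K$ be the value in $K$ of the lifted word. Then $G$ admits a presentation on $\tilde X$ together with finitely many $\BZ Q$-module generators of $K$, the relators $r\kappa(r)^{-1}$, and a set of $\BZ Q$-module relators for $K$. Finite presentability of $G$ thus reduces to showing that $K$ admits a finite module-presentation over $\BZ Q$ compatible with the lifts $\kappa(r)$.

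The main obstacle—and the technical heart of Groves's argument in \cite{groves}—is precisely this last step. The extra margin between the metabelian $2$-tame threshold and the $(c+1)$-tameness assumed here is what makes it work: for each character $\chi$ one obtains a half-space decomposition of $G$ coming from $\Sigma_A(Q)$, and one iterates this HNN-style construction through the $c$ layers of the lower central series of $N$. Each layer consumes one unit of tameness via the tensor inequality, and the remaining unit of slack provided by $(c+1)$-tameness is exactly what is needed to patch the local presentations into a single finite global one. I would expect the main difficulty to lie not in the inductive setup above but in executing this iterated HNN/patching argument at the module-relator level.
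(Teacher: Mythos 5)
First, note that the paper gives no proof of this statement: it is quoted verbatim from \cite[Cor.~C]{groves}, so the only meaningful comparison is with Groves's own argument. Your inductive setup (peeling off $K=\gamma_c(N)$ and invoking the Bieri--Strebel criterion at $c=1$) is sensible, but there is a genuine gap at exactly the point you defer. The reduction you assert --- that finite presentability of $G$ ``reduces to showing that $K$ admits a finite module-presentation over $\BZ Q$'' --- is false. An extension $K \mono G \epi \bar G$ with $\bar G$ finitely presented and abelian kernel $K$ that is finitely presented, indeed cyclic with a single defining relation, as a $\BZ Q$-module need not be finitely presented: the lamplighter group $\BZ/2\wr\BZ=\BZ\ltimes \BF_2[t,t^{-1}]$ is the standard counterexample. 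What fails there, and what must be controlled here, is the infinite family of commutation relations $[k_i^{g},k_j^{g'}]=1$ among conjugates of the module generators of the kernel; showing these follow from finitely many is where the tameness hypothesis is actually spent (via finite generation, under the diagonal $Q$-action, of tensor products such as $A\otimes\gamma_i(N)/\gamma_{i+1}(N)$), and your sketch replaces this step with the heuristic that ``each layer consumes one unit of tameness.'' A smaller slip: $K=\gamma_c(N)$ is central in $N$ but not in $G$, so this is not a central extension of $G$.

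The efficient (and intended) route is not to re-prove finite presentability from scratch but to deduce Corollary~C from Groves's Theorem~B, which the paper records as Proposition~\ref{grovesfp2}. Since $A=N/N'$ is $(c+1)$-tame, each $\gamma_i(N)/\gamma_{i+1}(N)$ is a quotient of $\otimes^i A$ and hence finitely generated over $\BZ Q$, and $\Sigma^c_{\gamma_i(N)/\gamma_{i+1}(N)}(Q)\subseteq conv_{\leq i}\Sigma^c_A(Q)$. If some $[\chi]$ lay in $\Sigma^c_A(Q)\cap -\Sigma^c_{\gamma_i(N)/\gamma_{i+1}(N)}(Q)$, then writing $-\chi$ as a positive combination of at most $i$ elements of $\Sigma^c_A(Q)$ would produce at most $i+1\leq c+1$ points of $\Sigma^c_A(Q)$ with a vanishing positively weighted sum, contradicting $(c+1)$-tameness. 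Hence the hypotheses of Proposition~\ref{grovesfp2} are satisfied and $G$ is finitely presented; the hard analytic content then lives entirely inside Groves's proof of Theorem~B, where it belongs.
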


\begin{proposition} \label{grovesfp2} \cite[Thm.~B]{groves}
Let $G$ be a finitely generated nilpotent-by-abelian group with normal nilpotent subgroup $N$ of nilpotency length $c$ with $Q = G/ N$ abelian.  Suppose that $\gamma_i(N) / \gamma_{i+1}(N)$ is a finitely generated $\BZ Q$-module and  that $\Sigma_{N / N'}^c(Q) \cap - \Sigma^c_{\gamma_i(N) / \gamma_{i+1}(N)} (Q) = \emptyset$
for every $ 1 \leq i \leq c$.  Then $G$ is finitely presented.
\end{proposition}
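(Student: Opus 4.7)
The plan is induction on the nilpotency class $c$ of $N$. For the base case $c=1$, $N = N/N'$ is abelian, so $G$ is metabelian, and the hypothesis with $i=1$ reads $\Sigma^c_{N/N'}(Q) \cap -\Sigma^c_{N/N'}(Q) = \emptyset$, i.e.\ $N/N'$ is $2$-tame as a $\BZ Q$-module. The Bieri-Strebel $\FP_2$ theorem for metabelian groups (the case $m=2$ of the $\FP_m$-Conjecture recorded above Lemma \ref{pos}) then gives that $G$ is finitely presented.

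For the inductive step, set $Z = \gamma_c(N)$. Since $[Z,N] = \gamma_{c+1}(N) = 1$, $Z$ is central in $N$, and we obtain a central extension
\[
1 \to Z \to G \to \bar G \to 1,
\]
where $\bar G = G/Z$ is nilpotent-by-abelian with nilpotent normal subgroup $\bar N = N/Z$ of class $c-1$ and quotient $Q$. Because $\gamma_i(\bar N)/\gamma_{i+1}(\bar N) \cong \gamma_i(N)/\gamma_{i+1}(N)$ for $1 \leq i \leq c-1$ and $\bar N/\bar N' = N/N'$, the hypotheses transfer to $\bar G$, so by induction $\bar G$ is finitely presented. It therefore suffices to show that $G$ can be built from a finite presentation of $\bar G$ using only finitely many additional relations.

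Fix a finite presentation $\bar G = \langle X \mid R \rangle$, lift $X$ to a generating set of $G$, and let $K$ be the kernel of $F(X) \twoheadrightarrow G$. The extra relators needed to generate $K$ normally in $F(X)$ split into two families: (a) $c$-fold Hall commutator identities in $N$, which produce elements of $Z$ along directions controlled by $\Sigma^c_{N/N'}(Q)$; and (b) $\BZ Q$-module relations forcing $Z \subseteq \gamma_c(N)/\gamma_{c+1}(N)$ to be generated by finitely many elements over $\BZ Q$, whose finiteness is controlled by $\Sigma^c_{\gamma_c(N)/\gamma_{c+1}(N)}(Q)$. The hypothesis at $i=c$ says precisely that these two complements lie in opposite open hemispheres at every character, which pairs a finiteness bound from one family with a bound from the other; a Bieri-Strebel style geometric argument on the Cayley $2$-complex of $G$ then extracts a finite presentation. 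The remaining indices $1 \leq i < c$ are handled in the same way at intermediate stages of the induction when lifting from $G/\gamma_{i+1}(N)$ to $G/\gamma_{i+2}(N)$.

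The main obstacle is making this last step rigorous: turning the $\Sigma^c$-intersection condition into a concrete Cockcroft-type bound on the $2$-cells of the Cayley complex. This essentially requires adapting the Bieri-Strebel proof of finite presentability for $2$-tame metabelian groups to the nilpotent setting, carefully tracking how higher commutator identities between successive lower-central factors interact with the module-theoretic tameness at each stage. A possible alternative is to use the Bieri-Renz characterisation of $\FP_2$ via $\Sigma^2$, combined with a spectral-sequence comparison relating $\Sigma^2(G)$ to the various $\Sigma_{\gamma_i(N)/\gamma_{i+1}(N)}(Q)$.
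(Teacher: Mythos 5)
This proposition is quoted verbatim from Groves (Theorem~B of \cite{groves}); the paper offers no proof of its own, so your attempt can only be measured against the statement itself. Your strategic outline is reasonable and almost certainly reflects the broad shape of the actual argument: induction on the nilpotency class $c$, with base case the Bieri--Strebel theorem that a finitely generated metabelian group $G$ with $G/N$ and $N$ abelian is finitely presented if and only if $N$ is $2$-tame (your reading of the $i=1$ hypothesis as $2$-tameness is correct), and inductive step a central extension $1 \to \gamma_c(N) \to G \to G/\gamma_c(N) \to 1$. The verification that the hypotheses descend to $G/\gamma_c(N)$ is also fine, since $\gamma_i(N/\gamma_c(N))/\gamma_{i+1}(N/\gamma_c(N)) \cong \gamma_i(N)/\gamma_{i+1}(N)$ for $i \leq c-1$.

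The genuine gap is the inductive step itself, and you concede as much. A central extension of a finitely presented group by a subgroup that is finitely generated as a $\BZ Q$-module is \emph{not} in general finitely presented: finite presentability of $G$ amounts to finite normal generation of the full relation kernel, and the passage from a finite presentation of $G/\gamma_c(N)$ to one of $G$ is obstructed by whether the relevant relation module (essentially an $H_2$ of the quotient, viewed over $\BZ Q$) is finitely generated. The condition $\Sigma_{N/N'}^c(Q) \cap -\Sigma^c_{\gamma_c(N)/\gamma_{c+1}(N)}(Q) = \emptyset$ is exactly what is supposed to control this, but your paragraph (a)/(b) about Hall commutator identities and ``a Bieri--Strebel style geometric argument on the Cayley $2$-complex'' asserts the conclusion rather than deriving it; that derivation \emph{is} the content of Groves' theorem, and it occupies most of his paper. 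Your alternative suggestion via $\Sigma^2(G)$ and a spectral-sequence comparison is likewise a research programme, not a proof. So the proposal identifies the right reduction but leaves the one step that carries all the difficulty unproved.
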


\noindent  The following result together with the remarks after Conjecture \ref{Bredonfpdconj} shows that Conjecture \ref{Bredonfpdconj} holds for $m = 2$.
  
  \begin{corollary}\label{centralfp}
Let $G$ be a finitely generated nilpotent-by-abelian group of finite Pr\"ufer rank.  Let $N$ be nilpotent of class $c$ as in (\ref{one}) with $Q$ abelian. Let $H$ be a finite group of order $s$ acting on $G$.  Assume also that $G$ is  of type $\FP_n$ such that $\frac{n}{cs} \geq 2$.   Then $C_G(H)$ is finitely presented. In particular, $C_G(H)$  is of type $FP_2$.
\end{corollary}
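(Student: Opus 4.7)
The plan is to apply Proposition \ref{grovesfp2} to the group $C_G(H)$ itself, using Theorem \ref{centralser} to supply the required finite generation and tameness of the lower central factors of $C_N(H)$. First I would verify the structural hypotheses on $C_G(H)$: since $N$ is $H$-invariant (as $N \normal G$ and $H$ acts on $G$), the subgroup $N_0 := C_N(H) = C_G(H) \cap N$ is normal in $C_G(H)$, is nilpotent of some class $c_0 \leq c$, and the quotient $Q_0 := C_G(H)/N_0$ embeds in $G/N = Q$ and so is abelian. Finite generation of $C_G(H)$ follows from Corollary \ref{centralfg}, whose hypothesis $n/(sc) \geq 1$ is implied by the stronger assumption $n/(sc) \geq 2$. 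Thus $C_G(H)$ is a finitely generated nilpotent-by-abelian group of finite Pr\"ufer rank.

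Next I would invoke Theorem \ref{centralser} (the assumption $n \geq cs$ being implied by $n/(sc) \geq 2$) to conclude that
$$D \;=\; \bigoplus_{i \geq 1} \gamma_i(N_0)/\gamma_{i+1}(N_0)$$
is a finitely generated, $\lfloor n/(sc) \rfloor$-tame, hence in particular $2$-tame, $\BZ Q_0$-module. This gives the finite generation of each summand $\gamma_i(N_0)/\gamma_{i+1}(N_0)$ over $\BZ Q_0$, which is the first hypothesis of Proposition \ref{grovesfp2}. For the intersection condition, by \cite[Lemma~1.1]{bs81} both $\Sigma^c_{N_0/N_0'}(Q_0)$ and $\Sigma^c_{\gamma_i(N_0)/\gamma_{i+1}(N_0)}(Q_0)$ are contained in $\Sigma^c_D(Q_0)$; if some $[\chi]$ lay in the first set and $[-\chi]$ in the second, both characters would lie in $\Sigma^c_D(Q_0)$ with $\chi + (-\chi) = 0$, contradicting $2$-tameness. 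So the intersection is empty for every $1 \leq i \leq c_0$.

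Proposition \ref{grovesfp2} therefore applies to $C_G(H)$ and yields that it is finitely presented, which in turn implies type $\FP_2$. I do not anticipate any serious obstacle, since the substantive technical work is packaged in Theorem \ref{centralser}: it already delivers tameness of all graded pieces of $C_N(H)$ in exactly the form demanded by Groves' criterion. The only point requiring a little care is checking that $C_G(H)$ genuinely inherits the nilpotent-by-abelian structure from $G$, with nilpotent kernel $C_N(H)$ of class at most $c$, so that Proposition \ref{grovesfp2} is legitimately applicable.
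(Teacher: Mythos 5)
Your proof is correct and takes essentially the same route as the paper: Corollary \ref{centralfg} for finite generation, Theorem \ref{centralser} for the $2$-tameness of $D$, and then Proposition \ref{grovesfp2}. You merely spell out two points the paper leaves implicit, namely that $C_G(H)$ inherits the nilpotent-by-abelian structure with kernel $C_N(H)$, and that $2$-tameness of $D$ yields the empty-intersection hypothesis of Groves' criterion.
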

  
  \begin{proof}
  By Corollary \ref{centralfg} $C_G(H)$ is finitely generated. By Theorem \ref{centralser}
  $D = \oplus_i \gamma_i(C_N(H)) / \gamma_{i+1} (C_N(H))$ is $\lfloor\frac{n}{sc}\rfloor$-tame as a $C_G(H)/C_N(H)$-module, in particular is 2-tame as a $C_G(H)/C_N(H)$-module. Then by Proposition \ref{grovesfp2} $C_G(H)$ is finitely presented.
  \end{proof}
 
\noindent Theorem \ref{centralser}, its corollaries and Groves' results now lead us to make the following conjecture:

\begin{conjecture}\label{centralserconj}
Let $G$ be a finitely generated nilpotent-by-abelian group. Let $N$ be nilpotent as in (\ref{one}) with $Q$ abelian and suppose  that the direct sum of all factors in the lower central series of $N$ is finitely generated and $m$-tame as a  $G/N$-modules. Then $G$ is of type $\FP_m.$
\end{conjecture}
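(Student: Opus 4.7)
The plan is to proceed by induction on the nilpotency class $c$ of $N$ and use iterated Lyndon--Hochschild--Serre spectral sequences. The base case $c=1$ is exactly the $\FP_m$-Conjecture for metabelian groups, since then $L = N/N' = N$ and the hypothesis says $N$ is $m$-tame as a $\BZ Q$-module; this is {\AA}berg's theorem under the finite Pr\"ufer rank hypothesis, and the full metabelian Bieri--Groves conjecture in general.

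For the inductive step, consider the central extension
\[
Z \monom N \epi \bar N, \qquad Z = \gamma_c(N), \ \bar N = N/Z,
\]
so $\bar N$ is nilpotent of class $c-1$, and since $Z \subseteq \gamma_i(N)$ for all $i \leq c$, the graded pieces of $\bar N$ are precisely $L_1,\ldots,L_{c-1}$. Their direct sum is a $\BZ Q$-module summand of $L$, hence remains finitely generated and $m$-tame, so by induction $\bar G = G/Z$ is of type $\FP_m$. To lift $\FP_m$ from $\bar G$ to $G$, I would use the LHS spectral sequence
\[
E^2_{p,q} = H_p(\bar G, H_q(Z, \BZ)) \Rightarrow H_{p+q}(G, \BZ),
\]
combined with Bieri's criterion characterising $\FP_m$ via commutation of $H_i(G,-)$ with direct products of $\BZ G$-modules for $i \leq m$. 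Since $Z$ is central in $G$, the action of $\bar G$ on $H_q(Z, \BZ)$ factors through $Q$; after disentangling the torsion contributions via the K\"unneth formula, $H_q(Z, \BZ)$ is built from subquotients of $\Lambda^q_{\BZ} Z$ as $\BZ Q$-modules, and the Bieri--Groves theorem on tensor and exterior powers of tame modules should control both its finite generation over $\BZ Q$ and its tameness. Feeding these estimates into the spectral sequence, finite generation of each $E^\infty_{p,q}$ for $p+q \leq m$ should yield $\FP_m$ for $G$.

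The main obstacle is the bookkeeping of tameness degrees through the two layers of spectral sequence: one must ensure that $m$-tameness of the graded pieces of the lower central series is strong enough to give all $E^2$ terms in total degree $\leq m$ the required finite generation. {\AA}berg's metabelian argument is already quite intricate in this respect, and generalising it to a length-$c$ filtration, in which each commutator bracket may degrade the tame degree, is the crux. A further obstruction is that, absent a finite Pr\"ufer rank hypothesis, the base case is itself the open Bieri--Groves $\FP_m$-Conjecture, so any such induction is at best conditional on that. A realistic first pass at Conjecture \ref{centralserconj} might therefore add finite Pr\"ufer rank and then adapt {\AA}berg's proof carefully to the iterated central-extension setting, using Corollary \ref{cortrivialsub} as the initial estimate for the $\BZ Q$-module tameness of $\bigoplus_i \gamma_i(N)/\gamma_{i+1}(N)$.
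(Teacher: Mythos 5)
The statement you are trying to prove is stated in the paper as a \emph{conjecture}, and the paper offers no proof of it: the authors only verify special cases ($m=1$ and $m=2$ via Groves' finite-presentability criteria in Corollaries \ref{centralfg} and \ref{centralfp}, and the case $h(G/N)\leq m-1$ via Meinert), and they prove a necessary-condition-style result about $H_t(N,\Z)$ that is merely consistent with the conjecture. So there is nothing in the paper to compare your argument against, and your proposal should be judged on whether it closes the problem. It does not: it is an honest plan that reduces the conjecture to two sub-problems, each of which is itself open.

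Concretely, the two gaps are these. First, your base case $c=1$ is exactly the sufficiency direction of the metabelian $\FP_m$-Conjecture, which the paper records as open in general; the conjecture as stated carries no finite Pr\"ufer rank hypothesis, so you cannot invoke {\AA}berg. Second, and more seriously, the inductive lifting step is not a routine spectral-sequence argument. The kernel $Z=\gamma_c(N)$ is central in $N$ (not in $G$, as you write --- the $Q$-action on $Z$ is nontrivial, though your conclusion that the $\bar G$-action on $H_q(Z,\Z)$ factors through $Q$ is correct), and $Z$ is in general not finitely generated as an abelian group, only as a $\Z Q$-module. Hence $Z$ is not of type $\FP_m$ as a group, and there is no known criterion converting ``$G/Z$ is $\FP_m$, and $H_q(Z,\Z)$ is finitely generated and suitably tame over $\Z Q$'' into ``$G$ is $\FP_m$''. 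That implication is essentially the conjecture itself in a one-step-lower guise: already for $c=1$ it is precisely the hard direction of Bieri--Groves. Knowing finite generation of the $E^2_{p,q}$ over $\Z Q$ does not feed into Bieri's direct-product criterion in any established way, and this is where {\AA}berg's metabelian proof does genuinely hard analytic/combinatorial work that nobody has extended to a length-$c$ filtration. Your proposal correctly identifies the crux, but identifying the crux is not the same as resolving it; as written, the argument proves nothing beyond what the paper already records.
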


\noindent This conjecture obviously implies Conjecture \ref{Bredonfpdconj}.  Just combine Proposition \ref{conjclasses}, Theorem \ref{centralser} and Corollary \ref{centralfg} with Theorem \ref{bredonfpn}.

\noindent Also note that Conjecture \ref{centralserconj} is also true if the Hirsch length  $h(G/N) \leq m-1$.  For in this case a result by H. Meinert \cite{meinert} implies that $G$ is of type $\FP_{\infty}.$
Furthermore, by \cite[Theorem 3.13]{martineznucinkis} centralisers of finite subgroups in soluble groups of type $\FP_{\infty}$ are also of type $\FP_{\infty}.$

\noindent The sufficient conditions for finite presentability of $G$ given in Proposition \ref{grovesfp} show that even in the case when $m = 2$ and $N$ is nilpotent of class 2 the converse of Conjecture \ref{centralserconj} is not true. Indeed if $A = N / N'$ is 3-tame then $G$ is finitely presented but if $N' = A \wedge A$ then $N'$ does not need be 2-tame. Actually if $A$ is 4-tame then $N'$ is 2-tame but in general this does not hold if $A$ is only 3-tame. 

\noindent It is a well known fact, see \cite[Proposition 5.3]{BieriGroves}, that for nilpotent-by-abelian groups of type $\FP_m$ the group $H_t(N,\Z)$ is finitely generated for all $1\leq t\leq m$.
Here we show that this condition holds under the assumptions of Conjecture \ref{centralserconj}.

\begin{proposition} Suppose that $G$ is a nilpotent-by-abelian group with derived subgroup $N$ of nilpotency  class $c$, $Q = G/ N$ abelian and assume
that  $D = \oplus_i \gamma_i(N) / \gamma_{i+1}(N)$  is finitely generated and $n$-tame as a $\BZ Q$-module and $n \geq c$. Then $H_t(N,\Z)$ is finitely generated and 
$\lfloor\frac{n}{t}\rfloor$-tame as a $\BZ Q$-module for $1\leq t\leq n $.
\end{proposition}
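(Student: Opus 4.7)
The plan is to induct on the nilpotency class $c$ of $N$, using the Lyndon--Hochschild--Serre spectral sequence of the central extension $Z \hookrightarrow N \twoheadrightarrow M$, where $Z = \gamma_c(N)$ and $M = N/Z$ has class $c-1$. I would actually prove the stronger inductive statement that, for each $1 \leq t \leq n$, the module $H_t(N,\Z)$ is finitely generated over $\BZ Q$ and satisfies $\Sigma^c_{H_t(N,\Z)}(Q) \subseteq conv_{\leq t}\Sigma_D^c(Q)$. Given this, the $n$-tameness of $D$ forces $\lfloor n/t \rfloor$-tameness of $H_t(N,\Z)$ by the same counting argument used in the last paragraph of the proof of Theorem \ref{centralser}: any $\lfloor n/t \rfloor$ points of $\Sigma^c_{H_t(N,\Z)}$ are positive combinations of at most $\lfloor n/t \rfloor \cdot t \leq n$ points of $\Sigma_D^c(Q)$, which therefore cannot sum to zero.

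The base case $c=1$ is immediate: $N=A=D$ is abelian, $H_t(N,\Z) = \wedge^t A$ is a quotient of $\otimes^t A$, and the inclusion $\Sigma^c_{\otimes^t A}(Q) \subseteq conv_{\leq t}\Sigma_D^c(Q)$ already recorded in the proof of Theorem \ref{centralser} does the job. For the inductive step, note first that $Z = \gamma_c(N)/\gamma_{c+1}(N)$ and $D(M) = \bigoplus_{i=1}^{c-1}\gamma_i(N)/\gamma_{i+1}(N)$ are direct summands of $D$ as $\BZ Q$-modules, so the inductive hypothesis applies to $M$. I would then analyse the $E^2$-page $E^2_{p,q} = H_p(M, H_q(Z,\Z))$ as follows. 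Since $Z$ is central in $N$, $M$ acts trivially on $H_q(Z,\Z)$, and $H_q(Z,\Z)$ is a subquotient of $Z^{\otimes q}$ as a $\BZ Q$-module, which yields $\Sigma^c_{H_q(Z,\Z)}(Q) \subseteq conv_{\leq q}\Sigma_D^c(Q)$. The universal coefficient theorem for a trivial $M$-module then expresses $H_p(M, H_q(Z,\Z))$ as a $\BZ Q$-linear extension of the diagonal-action modules $H_i(M,\Z) \otimes_\Z H_q(Z,\Z)$ and $\operatorname{Tor}^\Z_1(H_{i-1}(M,\Z), H_q(Z,\Z))$ for $i \leq p$. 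Combining the inductive bound $\Sigma^c_{H_i(M,\Z)}(Q) \subseteq conv_{\leq i}\Sigma_D^c(Q)$ with the tensor-product estimate $\Sigma^c_{A \otimes B}(Q) \subseteq conv(\Sigma^c_A \cup \Sigma^c_B)$ from \cite[Lemma~1.1]{bs81} gives $\Sigma^c_{E^2_{p,q}}(Q) \subseteq conv_{\leq p+q}\Sigma_D^c(Q)$. Since $H_t(N,\Z)$ is an iterated extension of subquotients $E^\infty_{p,q}$ with $p+q = t$, the same \cite[Lemma~1.1]{bs81} propagates the inclusion to $H_t(N,\Z)$.

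The main obstacle will be keeping track of finite generation in parallel with the tameness bounds. I would invoke the Bieri--Groves type fact, implicit in the proof of Theorem \ref{centralser}, that the tensor power $Z^{\otimes q}$ is finitely generated as a $\BZ Q$-module with diagonal action whenever $q \leq n$; combined with the inductive finite generation of $H_i(M,\Z)$ and the analogous diagonal-tensor estimate, this makes each $E^2_{p,q}$ with $p+q \leq n$ finitely generated over $\BZ Q$, and hence so is the abutment $H_t(N,\Z)$.
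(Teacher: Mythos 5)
Your overall architecture is the same as the paper's: run the Lyndon--Hochschild--Serre spectral sequence along the lower central series of $N$, express $H_t(N,\Z)$ as an iterated extension of subsections of tensor powers of $D$ of weight at most $t$, get finite generation from the Bieri--Groves theorem on tensor powers of tame modules (valid since $D$ is $n$-tame and the weights stay $\leq n$), and finish with the convexity estimates $\Sigma^c_{\otimes^s D}(Q)\subseteq \text{conv}_{\leq s}\Sigma^c_D(Q)$ from \cite{bs81}. Organising this as an induction on the class via the central extension $\gamma_c(N)\monom N\epim N/\gamma_c(N)$ rather than quoting the iterated spectral sequence outright is a presentational difference, not a mathematical one.

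There is, however, one genuine gap: you twice assert that the homology of an abelian group is controlled by the \emph{expected} tensor power --- in the base case that $H_t(A,\Z)=\wedge^t A$ is a quotient of $\otimes^t A$, and in the inductive step that $H_q(Z,\Z)$ is a subquotient of $Z^{\otimes q}$ as a $\BZ Q$-module. Both statements are false when the abelian group has torsion, and the proposition imposes no torsion-freeness on $N$ (indeed the paper's examples in Section \ref{prime} have $A$ of prime exponent). Already for $A=\Z/p$ with $Q$ acting by a unit $u$, one has $H_3(A,\Z)\iso\Z/p$ with $Q$ acting by $u^2$, which is a subquotient of $\otimes^2A$ but not of $\otimes^3A$; and $\wedge^3 A=0\neq H_3(A,\Z)$. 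The correct statement, which the paper extracts from the proof of \cite[Thm.~C]{Koch2}, is that $H_q(A,\Z)$ admits a finite filtration whose factors are $\BZ Q$-subsections of \emph{possibly different} tensor powers $\otimes^jA$ with $j\leq q$. That weaker statement still feeds into $\text{conv}_{\leq q}\Sigma^c_D(Q)$ (since $\text{conv}_{\leq j}\subseteq\text{conv}_{\leq q}$ for $j\leq q$) and into the finite generation argument, so your proof is repaired by substituting it; but it is a nontrivial structural fact that your argument neither proves nor cites, and without it your base case is simply wrong for torsion modules. A smaller point: the tensor-product estimate $\Sigma^c_{A\otimes B}(Q)\subseteq\text{conv}(\Sigma^c_A\cup\Sigma^c_B)$ is \cite[Thm.~1.3]{bs81}, not Lemma 1.1 (which handles extensions/filtrations), and the Tor terms in your universal-coefficient step need the analogous estimate, which you should state rather than leave implicit.
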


\begin{proof} 

Denote by  $A_i$  the $i$-th factor in  the
lower central series of $N$. 
The proof of the proposition depends on the structure of $H_j(A_i, \BZ)$. For any torsion-free abelian group $A$ there is a natural isomorphism $H_j(A, \BZ) \simeq \wedge^j A$ \cite[Ch.~V,~Thm.~6.4]{Brown}. The situation turns a bit more complicated when $A$ has torsion. By the proof of \cite[Thm.~C]{Koch2} if $Q$ is a finitely generated abelian group and $A$ is a finitely generated $\BZ Q$-module  then $H_i(A, \BZ)$ has a finite filtration with factors isomorphic to $\BZ Q$-subsections of, possibly different, tensor powers of $A$, where the action of $Q$ is the diagonal one. 
Inspection of the proof of \cite[Thm.~C]{Koch2} shows that the tensor powers are of type $\otimes^j A$ for $j \leq i$.

\noindent
By repeating a Lyndon-Hochschild-Serre
spectral sequence argument one proves that $H_t(N,\Z)$ has a series
with factors which are subsections of modules of the form
$$H_{i_1}(A_1,\Z)\otimes \ldots \otimes H_{i_c}(A_c, \BZ)$$
such that $i_1+ i_2 + \ldots + i_c=t$.
Note that every $H_{i_j}(A_j, \BZ)$ has a filtration with quotients that are subsections of $\otimes^s A_j$ for some $s \leq i_j$  . Then 
$H_{i_1}(A_1,\Z)\otimes\ldots\otimes H_{i_c}(A_c,\Z)$ has a filtration with quotients that are subsections of 
$(\otimes^{s_1} A_1) \otimes (\otimes^{s_2} A_2) \otimes \ldots \otimes (\otimes^{s_c} A_c)$ for $s_j \leq i_j$, hence these quotients are subsections of $\otimes^{s_1 + s_2 + \ldots + s_c} D$ for  $s_1 + s_2 + \ldots + s_c \leq 
i_1+ i_2 + \ldots + i_c=t$.
Since $H_t(N, \BZ)$ has a filtration with quotients that are subsections of $\otimes^m D$ for $m \leq t \leq n$ and $\otimes^i D$ is finitely generated as a $\BZ Q$-module for $i \leq n,$ we deduce that $H_t(N, \BZ)$ is finitely generated as a $\BZ Q$-module for $t \leq n$.

\noindent
Finally \cite[Lemma 1.1. and Theorem 1.3.]{bs81} yield
 $$\begin{aligned}\Sigma^c_{H_t(N,\Z)}(Q)&\subseteq\bigcup_{s \leq t}\Sigma^c_{\otimes^{s}D}(Q)
 \subseteq\\
 &\bigcup_{s \leq t}\text{conv}_{ \leq s}\Sigma^c_D(Q)\subseteq
 \text{conv}_{\leq t}\Sigma^c_D(Q).
 \end{aligned}$$
As $0\not\in\text{conv}_{\leq n}\Sigma^c_D(Q)$, we deduce
$$0\not\in\text{conv}_{\leq \frac{n}{t}}\Sigma^c_{H_t(N,\Z)}(Q).$$

\end{proof}



\end{document}